\newtheorem{defi}{Definition}
\newtheorem{teor}{Theorem}
\newtheorem{cor}{Corollary}
\newtheorem{prop}{Proposition}
\newtheorem{rem}{Remark}
\newtheorem{lemma}{Lemma}
\newtheorem{conj}{Conjecture}
\newtheorem{nota}{Notations}
\DeclareMathOperator{\Sym}{Sym}
\DeclareMathOperator{\Aut}{Aut}
\DeclareMathOperator{\Alt}{Alt}
\DeclareMathOperator{\soc}{soc}
\begin{document}

\title{Covering monolithic groups \\ with proper subgroups}
\author{Martino Garonzi}

\begin{abstract}
Given a finite non-cyclic group $G$, call $\sigma(G)$ the smallest number of proper subgroups of $G$ needed to cover $G$. Lucchini and Detomi conjectured that if a nonabelian group $G$ is such that $\sigma(G) < \sigma(G/N)$ for every non-trivial normal subgroup $N$ of $G$ then $G$ is \textit{monolithic}, meaning that it admits a unique minimal normal subgroup. In this paper we show how this conjecture can be attacked by the direct study of monolithic groups.
\end{abstract}

\maketitle

Every group considered in this paper is assumed to be finite, unless specified otherwise.

Given a non-cyclic group $G$, call $\sigma(G)$ - the \textit{covering number} of $G$ - the smallest number of proper subgroups of $G$ whose union equals $G$. It is an easy exercise to show that $\sigma(G) > 2$ (i.e. no group is the union of two proper subgroups). Note that there always exist minimal covers consisting of maximal subgroups. The covering number has been introduced the first time by Cohn in 1994 \cite{cohn}. We usually call \textit{cover} of $G$ a family of proper subgroups of $G$ which covers $G$, and \textit{minimal cover} of $G$ a cover of $G$ consisting of exactly $\sigma(G)$ elements. If $G$ is cyclic then $\sigma(G)$ is not well defined because no proper subgroup contains any generator of $G$; in this case we define $\sigma(G)=\infty$, with the convention that $n < \infty$ for every integer $n$.

\begin{rem}
If $N$ is a normal subgroup of a group $G$ then $\sigma(G) \leq \sigma(G/N)$: indeed, every cover of $G/N$ can be lifted to a cover of $G$.
\end{rem}

Given a family $\mathcal{H}$ of subsets of a group $G$ which covers $G$, we say that $\mathcal{H}$ is ``irredundant'' if $\bigcup_{\mathcal{H} \ni K \neq H} K \neq G$ for every $H \in \mathcal{H}$. Clearly every minimal cover is irredundant, but the converse is false. Actually the notion of irredundant cover is much weaker than that of minimal cover: for example, if $n \geq 2$ is an integer then the cover of ${C_2}^n$ consisting of its non-trivial cyclic subgroups is irredundant of size $2^n-1$ while ${C_2}^n$ has an epimorphic image isomorphic to $C_2 \times C_2$ so $\sigma({C_2}^n) = 3$.

We are interested in groups with finite covering number. The following result implies that in order to study the behaviour of the function which assigns to each group with finite covering number its covering number it is enough to consider finite groups.

\begin{teor}[Neumann 1954] \label{neum}
Let $G$ be an infinite group covered by a finite family $\mathcal{H}$ of cosets of subgroups of $G$, and suppose that $\mathcal{H}$ is irredundant. Then every $H \in \mathcal{H}$ has finite index in $G$.
\end{teor}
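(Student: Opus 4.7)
The plan is to argue by induction on $n = |\mathcal{H}|$. The base case $n=1$ is immediate: from $G = gH$ one obtains $H = G$, which has index $1$. For the inductive step, I assume $\mathcal{H}$ is an irredundant cover of $G$ of size $n$ and, for contradiction, that the subgroup $H_1$ underlying some coset in $\mathcal{H}$ has infinite index in $G$.

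First I would group the cosets of $\mathcal{H}$ by underlying subgroup, writing $\mathcal{H} = \mathcal{H}_1 \sqcup \mathcal{R}$, where $\mathcal{H}_1$ consists of the finitely many cosets of $H_1$ in $\mathcal{H}$ and $\mathcal{R}$ contains the rest. Since $[G:H_1] = \infty$ while $\mathcal{H}_1$ is finite, infinitely many cosets $bH_1$ of $H_1$ lie outside $\mathcal{H}_1$; any such $bH_1$ is disjoint from $\bigcup\mathcal{H}_1$, so it satisfies $bH_1 \subseteq \bigcup\mathcal{R}$. Using the elementary fact that $bH_1 \cap gH_j$ is either empty or a left coset of $H_1 \cap H_j$ contained in $bH_1$, left-translation by $b^{-1}$ transports this inclusion into a covering of $H_1$ itself by at most $|\mathcal{R}| < n$ cosets of the various subgroups $H_1 \cap H_j$ (with $H_j \neq H_1$). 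Passing to an irredundant subcover of the latter and invoking the inductive hypothesis inside the (assumed infinite) subgroup $H_1$ then forces some $H_1 \cap H_j$ to have finite index in $H_1$.

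From here the strategy is to conclude that $\bigcup \mathcal{R} = G$, contradicting the irredundancy of $\mathcal{H}$ since any coset in $\mathcal{H}_1$ could then be removed. The main obstacle is precisely this final step: the inequality $[H_1 : H_1 \cap H_j] < \infty$ does not give $[G : H_j] < \infty$, so redundancy of $\mathcal{H}_1$ is not immediate. What is needed is a combinatorial argument exploiting the infinitely many admissible choices of $b$ to show that every point of each $a_iH_1 \in \mathcal{H}_1$ already lies in some member of $\mathcal{R}$; naive left-translation sends $bH_1$ to $a_iH_1$ but does not preserve $\mathcal{R}$, so one cannot simply transport the inclusion. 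I expect the cleanest finish to be a translation/pigeonhole trick that, given $x \in a_iH_1 \setminus \bigcup\mathcal{R}$ (whose existence follows from irredundancy), produces an incompatible $b$-covering and thus a contradiction. The degenerate case where $H_1$ itself is finite — so the induction cannot recur inside $H_1$ — must be handled separately, but is tractable: then $\bigcup \mathcal{H}_1$ is finite while its complement, which is infinite, sits inside $\bigcup \mathcal{R}$, and one can argue directly on the finitely many elements of $\bigcup\mathcal{H}_1$.
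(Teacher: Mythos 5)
Your proposal is not a complete proof, and you say so yourself: the step you label the ``main obstacle'' is precisely the content of Neumann's lemma, and nothing in your setup supplies it. (For what it is worth, the paper does not prove the statement either; it only cites Lemma 4.17 of Neumann's 1954 article, so the relevant comparison is with the classical argument.) The information your induction inside $H_1$ produces --- that every subgroup occurring in an irredundant subcover, in particular some $H_1\cap H_j$, has finite index in $H_1$ --- is genuinely too weak: it is perfectly compatible with every $H_j$ having infinite index in $G$ (already $H_1\leq H_j$ with both of infinite index gives $[H_1:H_1\cap H_j]=1$), so it cannot by itself make the cosets of $H_1$ redundant, and no pigeonhole over the admissible elements $b$ is exhibited that would close the argument from this point. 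The ``degenerate case'' where $H_1$ is finite is not easier either: showing that the finitely many points of $\bigcup\mathcal{H}_1$ are absorbed by $\bigcup\mathcal{R}$ is exactly the same omission problem, not a triviality.

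The standard way to make your first move succeed is to change both the induction parameter and the statement being proved. First show, by induction on the number of \emph{distinct} subgroups occurring, that an infinite group is never the union of finitely many cosets of subgroups all of infinite index: with your coset $bH_1$ disjoint from $\bigcup\mathcal{H}_1$, every coset $aH_1$ of the family satisfies $aH_1=(ab^{-1})\,bH_1\subseteq (ab^{-1})\bigcup\mathcal{R}$, so all cosets of $H_1$ may be replaced by finitely many translates of members of $\mathcal{R}$; this \emph{eliminates} $H_1$ from the covering (instead of descending into $H_1$, which is what severs the link with $G$), and the one-subgroup case is clear because $m$ cosets of $H$ covering $G$ force $[G:H]\leq m$. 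Next upgrade this to the omission form: if some $g\in G$ lies in no coset of the family whose subgroup has finite index, set $K$ equal to the intersection of those finite-index subgroups (of finite index by Poincar\'e); then $gK$ meets none of the finite-index cosets, hence is covered by the infinite-index ones, and intersecting with $gK$ and translating exhibits the infinite group $K$ as a finite union of cosets of subgroups of infinite index in $K$, contradicting the first step. The omission form says the finite-index cosets alone cover $G$, and irredundancy then immediately excludes any member with infinite-index subgroup, which is the theorem. In short: your opening translation trick is the right ingredient, but it must be used to remove $H_1$ from the cover of $G$, and the induction must carry the stronger ``no cover by infinite-index cosets / omission'' statement rather than the irredundant statement itself.
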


\begin{proof}
For a proof see Lemma 4.17 in \cite{neum}.
\end{proof}

Indeed, if $\mathcal{H}$ is a minimal cover of $G$ then by Theorem \ref{neum} $\bigcap_{H \in \mathcal{H}}H$ has finite index in $G$, hence its normal core $N$ has also finite index and $$\sigma(G/N) \leq |\mathcal{H}| = \sigma(G) \leq \sigma(G/N),$$thus $\sigma(G)=\sigma(G/N)$. In other words we are reduced to consider the covering number of the finite group $G/N$.

The solvable groups were studied by Tomkinson. He proved the following result. Recall that a ``chief factor'' of a group $G$ is a minimal normal subgroup $H/K$ of a quotient $G/K$ of $G$.

\begin{teor}[Tomkinson] \label{tom}
If $G$ is a finite non-cyclic solvable group then $\sigma(G)=q+1$ where $q$ is the order of the smallest chief factor $H/K$ of $G$ with more than one complement in $G/K$.
\end{teor}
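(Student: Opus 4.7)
\emph{Proof plan.} My plan is to prove the two inequalities $\sigma(G)\le q+1$ and $\sigma(G)\ge q+1$ separately, relying on two structural facts about solvable groups: every chief factor is elementary abelian, and every maximal subgroup $M$ of $G$ complements a uniquely determined chief factor of $G/\mathrm{Core}_G(M)$.

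For the \emph{upper bound}, fix a chief factor $H/K$ realising the minimum $q$. By Remark 1 it is enough to cover $\overline{G}:=G/K$ with $q+1$ proper subgroups. Set $V:=H/K$, which by solvability is elementary abelian of order $q$ and minimal normal in $\overline{G}$. Fix a complement $C_0$ of $V$ in $\overline{G}$; the complements of $V$ are then parametrised as $C_\delta=\{\delta(c)c:c\in C_0\}$ with $\delta$ running over the derivation group $Z^1(C_0,V)$. The plan is to exhibit $q$ derivations $\delta_1,\ldots,\delta_q$ whose evaluations $\delta_i(c)$ exhaust $V$ at every non-identity $c\in C_0$. Granted this, each non-identity coset $Vc$ of $V$ in $\overline{G}$ is met by $C_{\delta_i}$ in the single point $\delta_i(c)c$ and these $q$ points fill $Vc$; so $\bigcup_{i=1}^q C_{\delta_i}\supseteq\overline{G}\setminus V$, and adjoining the proper subgroup $V$ (proper since $V$ being complemented and $C_0\ne1$ force $V\ne\overline{G}$) yields a cover of size $q+1$.

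For the \emph{lower bound}, let $\mathcal{H}$ be a minimal cover of $G$ by maximal subgroups and put $K:=\bigcap_{M\in\mathcal{H}}\mathrm{Core}_G(M)$. Remark 1 gives $\sigma(G/K)\le|\mathcal{H}|=\sigma(G)\le\sigma(G/K)$, so we may pass to $G/K$ and assume $K=1$, in which case each $M\in\mathcal{H}$ is a complement in $G$ to some minimal normal subgroup $N_M$. Take $N$ of minimum order among the $N_M$: an element of $N\setminus\{1\}$ is covered only by those $M\in\mathcal{H}$ containing $N$, while every complement of $N$ meets each non-identity coset $Ng$ in exactly one point, so a coset-by-coset count forces $|\mathcal{H}|\ge|N|+1$. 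Finally $|N|\ge q$ because $N$ must have more than one complement in $G$ (otherwise a single complement of $N$ together with a cover of $G/N$ would yield a strictly smaller cover, contradicting minimality), whereupon the definition of $q$ applies.

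The \emph{main obstacle} is the upper bound: producing $q$ derivations whose evaluations fill $V$ at every $c\ne1$. When $V^{C_0}=1$ the $q$ principal derivations $\delta_v(c):=cvc^{-1}v^{-1}$ ($v\in V$) are pairwise distinct and their evaluations at any non-identity $c$ exhaust $V$. When $V^{C_0}\ne1$, and especially when $V$ is central in $\overline{G}$ so that all multiplicity among complements comes from $H^1(C_0,V)$, one must combine principal derivations with representatives of non-trivial cohomology classes; the minimality of $q$ enters crucially here, ruling out situations where the evaluation map has an image too small to cover $V$. A subsidiary but parallel issue appears in the lower bound when verifying that $N$ has at least two complements, and is handled by the same minimality-of-$q$ considerations after passing to $G/K$.
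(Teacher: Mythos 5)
The paper does not actually prove this theorem (it is quoted from Tomkinson's article), so your proposal has to stand on its own, and as it stands both halves have genuine gaps. For the upper bound, the lemma you lean on in the non-central case is false as stated: the evaluations $v\mapsto\delta_v(c)=cvc^{-1}v^{-1}$ of the principal derivations exhaust $V$ exactly when $C_V(c)=1$, which is much stronger than $V^{C_0}=1$. For instance in $\overline{G}=S_4$ with $V$ the Klein four-group and $C_0\cong S_3$ one has $V^{C_0}=1$, yet for a transposition $c$ the set $\{[c,v]:v\in V\}$ has order $2$ (here $q=3<|V|$, so the theorem is safe, but it shows your lemma needs the minimality of $q$ in an essential and unproved way). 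Worse, in the central case the very shape of your proposed cover (the subgroup $V$ itself plus $q$ complements, with every coset $Vc$, $c\neq 1$, filled by complement points) is provably impossible whenever $C_0$ is non-abelian: derivations are then homomorphisms $C_0\to V$, all of which vanish on $[C_0,C_0]$, so a coset $Vc$ with $1\neq c\in[C_0,C_0]$ meets every complement only in the point $c$. Already for $G=C_2\times S_3$ (where $q=2$ is realized by the central factor) the minimal cover must use $C_2\times A_3$, not $V$, as its $V$-containing member. So the fix is not ``combine principal derivations with nontrivial cocycles''; you must allow a proper subgroup $W\supseteq V$ that absorbs the cosets where the complements fail, and prove such a $W$ exists, which is where the real work (and the minimality of $q$) lies.

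For the lower bound, from $\bigcap_{M\in\mathcal{H}}\mathrm{Core}_G(M)=1$ you cannot conclude that each member is core-free, hence not that each $M\in\mathcal{H}$ complements a minimal normal subgroup: in the minimal cover $\{A_3,\langle(12)\rangle,\langle(13)\rangle,\langle(23)\rangle\}$ of $S_3$ the member $A_3$ has core $A_3$ and complements nothing. The correct dichotomy is that every maximal subgroup either contains a given abelian minimal normal subgroup $N$ or complements it; but then your count still needs two steps you do not supply: (i) the case in which the members containing $N$ already cover $G$, where no complements are forced and one must instead induct on $G/N$ (checking that $q(G/N)\geq q(G)$ and that $G/N$ is non-cyclic); and (ii) the claim that the chosen $N$ has more than one complement. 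Your argument for (ii) does not work: the lift of a cover of $G/N$ already covers all of $G$, so adjoining one complement of $N$ produces a cover of size at least $\sigma(G/N)\geq\sigma(G)$ and yields no contradiction. The case of chief factors with at most one complement has to be handled by showing the cover essentially comes from a proper quotient and inducting, which is the structure of Tomkinson's actual argument; without these repairs the proposed proof does not go through.
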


Note that the number $q$ in the statement of Theorem \ref{tom} is a prime power. Not every $\sigma(G)$ is of the form $q+1$ with $q$ a prime power, for example $\sigma(\Sym(6))=13$ (cfr. \cite{shaker}).

Assume we want to compute the covering number of a group $G$. If there exists $N \unlhd G$ with $\sigma(G) = \sigma(G/N)$ then we may consider as well the quotient $G/N$ instead of $G$. This leads instantly to the following definition.

\begin{defi}[$\sigma$-elementary groups] \index{$\sigma$-elementary group}
We say that a group $G$ is ``$\sigma$-elementary'' if $\sigma(G) < \sigma(G/N)$ for every non-trivial normal subgroup $N$ of $G$.
\end{defi}

Clearly, every group has a $\sigma$-elementary quotient with the same covering number. It follows that the structure of the $\sigma$-elementary groups is of big interest. It was studied by Lucchini and Detomi in \cite{cubo}. They conjectured that:

\begin{conj} \label{mainconj}
Every non-abelian $\sigma$-elementary group is monolithic.
\end{conj}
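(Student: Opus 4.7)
I would proceed by contradiction. Suppose $G$ is a non-abelian $\sigma$-elementary group which is not monolithic. Then $G$ admits two distinct minimal normal subgroups $N_1$ and $N_2$, and minimality together with $N_1 \neq N_2$ forces $N_1 \cap N_2 = 1$. Consequently $G$ embeds into $G/N_1 \times G/N_2$ via the natural projections $\pi_1,\pi_2$, and the $\sigma$-elementary hypothesis gives
$$\sigma(G) < \sigma(G/N_i),\qquad i=1,2.$$
The aim is to manufacture, from a minimal cover of $G$, a cover of either $G/N_1$ or $G/N_2$ of size at most $\sigma(G)$, contradicting one of these strict inequalities.

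First step: fix a minimal cover $\mathcal{H}$ of $G$ by maximal subgroups, and for each $i$ split $\mathcal{H} = \mathcal{A}_i \sqcup \mathcal{B}_i$ with $\mathcal{A}_i = \{H \in \mathcal{H}: N_i \leq H\}$, so that by maximality $HN_i = G$ for every $H \in \mathcal{B}_i$. If the family $\{H/N_i : H \in \mathcal{A}_i\}$ already covered $G/N_i$, we would get $\sigma(G/N_i) \leq |\mathcal{A}_i| \leq \sigma(G)$, a contradiction. So there is a nonempty set $T_i \subseteq G/N_i$ of ``bad cosets'' $\bar x$ for which $xN_i$ is disjoint from $\bigcup \mathcal{A}_i$ and is therefore entirely contained in $\bigcup \mathcal{B}_i$. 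Since $HN_i = G$ for $H \in \mathcal{B}_i$, each such $H$ meets every $N_i$-coset in exactly $|H\cap N_i|$ elements, and one obtains the basic counting inequality
$$\sum_{H \in \mathcal{B}_i} |H \cap N_i| \geq |N_i|$$
for each bad coset.

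Second step: dispose of the case where one of the $N_i$, say $N_1$, is abelian. Then for every $H \in \mathcal{B}_1$ the subgroup $H \cap N_1$ is normal in $H$ and also in $N_1$ (since $N_1$ is abelian), hence normal in $HN_1 = G$; minimality of $N_1$ forces $H \cap N_1 = 1$, so $H$ is a complement of $N_1$. The counting inequality then yields $|\mathcal{B}_1| \geq |N_1|$, and one can appeal to the Tomkinson-style description of covers of complements of chief factors (Theorem \ref{tom} in the solvable case, together with its monolithic refinements) to replace each complement by a suitable maximal subgroup of $G/N_1$ covering the corresponding bad coset, producing a cover of $G/N_1$ of size $\leq |\mathcal{A}_1| + |\mathcal{B}_1| = \sigma(G)$ and closing the abelian subcase.

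Third step, the genuinely hard one: both $N_1$ and $N_2$ are non-abelian, hence each is a direct power of isomorphic non-abelian simple groups. Now $H \cap N_i$ may be substantial, so the counting argument no longer yields a linear lower bound on $|\mathcal{B}_i|$; moreover subgroups $H \in \mathcal{B}_1 \cap \mathcal{B}_2$ project onto both quotients and therefore carry no information as proper subgroups of either $G/N_1$ or $G/N_2$. Any proof must exploit the detailed structure of maximal subgroups of almost simple groups (their types, indices, and the way they intersect the socle) to show that a family of maximal subgroups of $G$ surjecting onto both quotients can be exchanged, coset by coset, for a family of comparable size living entirely inside $G/N_1$ or $G/N_2$. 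This is precisely the obstacle, and it is exactly what the paper indicates by proposing to attack the conjecture \emph{through a direct study of monolithic groups}: one expects the argument to factor through sharp bounds on $\sigma(L)$ for monolithic $L$, rather than a purely combinatorial reduction.
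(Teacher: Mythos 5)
The statement you are trying to prove is Conjecture \ref{mainconj}: it is an open conjecture of Detomi and Lucchini, and the paper does not prove it --- it only establishes partial results (e.g.\ Proposition \ref{56} for $\sigma(H) \leq 55$, Proposition \ref{starmin}, and the final corollary for alternating socles of large even degree), all obtained through the structure theory of Theorem \ref{struttura}, the invariant $\sigma^{\ast}$, and explicit covers of monolithic groups. Your proposal is likewise not a proof: your third step is an acknowledged gap, not an argument, and that gap is exactly where the whole difficulty of the conjecture lives. So the honest assessment is that you have produced a plausible strategy outline, not a proof of the statement.

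Beyond the admitted gap, your second step is also not sound as written. From a complement $H$ of $N_1$ you cannot simply ``replace'' $H$ by a maximal subgroup of $G/N_1$: the image $HN_1/N_1$ is all of $G/N_1$, so $H$ contributes nothing as a proper subgroup of the quotient, and the bad cosets of $N_1$ it covers must be recaptured by genuinely new subgroups of $G/N_1$; Tomkinson's theorem (which concerns solvable groups) gives no licence for this exchange, and in fact controlling this situation is what the paper's machinery is for --- it handles the abelian minimal normal subgroup via Proposition \ref{abmns} (the bound $\sigma(X_2) < 2|N_2|$) combined with the inequality $\sum_i \sigma^{\ast}(X_i) \leq \sigma(H) \leq \min_i \sigma(X_i)$ of Proposition \ref{sigmastar}, not via a coset-by-coset replacement. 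Note also that your framework differs from the paper's: you work with the quotients $G/N_i$, whereas the paper works with the primitive monolithic groups $X_i = H/R_H(N_i)$, which are quotients by larger normal subgroups and carry the primitivity needed to analyze maximal subgroups via the O'Nan--Scott description; this is the reduction that makes the ``direct study of monolithic groups'' (Theorems \ref{sopra}, \ref{t1}, \ref{t2} and Lemma \ref{nonciclo}) bear on the conjecture. If you want to contribute along your lines, the realistic target is a new sufficient criterion of the type $\sigma(X) < 2\sigma^{\ast}(X)$ for specific families of primitive monolithic groups $X$, not a direct combinatorial proof of the full conjecture.
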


Here a group is said to be ``\textit{monolithic}'' if it admits exactly one minimal normal subgroup.

\section{Covering nilpotent groups}

In this section we will compute the covering number of nilpotent groups in order to get the reader familiarized with the methods.

Let $p$ be a prime. Observe that the group $C_p \times C_p$ admits exactly $p+1$ proper subgroups, and all these subgroups are cyclic of order $p$ and index $p$. Let us visualize this in the subgroup lattice: $$\xymatrix{& C_p \times C_p & & \\ \bullet \ar@{-}[ur] & \bullet \ar@{-}[u] & \cdots & \bullet \ar@{-}[ull] \\ & \{1\} \ar@{-}[ul] \ar@{-}[u] \ar@{-}[urr] & & }$$Therefore there is a unique cover of $C_p \times C_p$, it is the one consisting of all of its non-trivial proper subgroups. We obtain that $\sigma(C_p \times C_p) = p+1$.

\ 

The following result (which generalizes the equality $\sigma(C_p \times C_p) = p+1$) is a direct consequence of Theorem \ref{tom}. However, we will prove it in detail.

\begin{prop} \label{nilpotent}
Let $G$ be a finite nilpotent group. Then $\sigma(G) = p+1$ where $p$ is the smallest prime divisor of $|G|$ such that the Sylow $p$-subgroup of $G$ is not cyclic.
\end{prop}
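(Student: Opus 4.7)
The plan is to reduce the problem to the abelian case via the Frattini quotient $G/\Phi(G)$, and then to compute covering numbers of elementary abelian groups and their direct products. Two standard facts drive the reduction. First, every minimal cover of a group can be taken to consist of maximal subgroups. Second, every maximal subgroup $M$ of $G$ contains $\Phi(G)$, so $M$ is the full preimage of $M/\Phi(G)$ under the quotient map. Hence covers of $G$ by maximal subgroups are in size-preserving bijection with covers of $G/\Phi(G)$ by maximal subgroups, giving $\sigma(G) = \sigma(G/\Phi(G))$.

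Because $G$ is nilpotent, $G$ decomposes as the direct product $\prod_q G_q$ of its Sylow subgroups, so $G/\Phi(G) \cong \prod_q V_q$ where $V_q := G_q/\Phi(G_q)$ is elementary abelian of dimension $d_q \geq 0$ over $\mathbb{F}_q$. The factors $V_q$ have pairwise coprime orders, so every subgroup of $V := \prod_q V_q$ splits as a direct product of subgroups of the $V_q$; in particular each maximal subgroup of $V$ has all factors equal to $V_q$ except for one, which is replaced by a hyperplane. Consequently a family of maximal subgroups of $V$ covers $V$ if and only if, for some prime $q$, the hyperplanes chosen inside $V_q$ already cover $V_q$, yielding $\sigma(V) = \min_q \sigma(V_q)$.

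It remains to show $\sigma(V_q) = q+1$ when $d_q \geq 2$ and $\sigma(V_q) = \infty$ otherwise (so the minimum effectively ranges over primes $q$ with non-cyclic Sylow subgroup). The upper bound comes from fixing any surjection $V_q \twoheadrightarrow \mathbb{F}_q^2$ and pulling back the $q+1$ lines through the origin, whose union is all of $\mathbb{F}_q^2$. For the lower bound, if $H_1,\dots,H_k$ are hyperplanes covering $V_q$, then $|V_q \setminus H_1| = q^{d_q-1}(q-1)$, while each $H_i$ with $H_i \neq H_1$ contributes at most $|H_i \setminus H_1| = q^{d_q-2}(q-1)$ points outside $H_1$, forcing $k-1 \geq q$. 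Combining these observations produces $\sigma(G)=p+1$ with $p$ the smallest prime for which $G_p$ is non-cyclic.

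The most delicate ingredient is the splitting of subgroups of $V$ as direct products along the coprime factors $V_q$: without this, maximal subgroups of $V$ could mix factors and the second paragraph's reduction $\sigma(V)=\min_q\sigma(V_q)$ would fail. Everything else is either bookkeeping or a short counting argument.
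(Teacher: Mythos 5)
Your proof is correct, and while it shares the paper's overall skeleton (Frattini reduction, coprime splitting along the Sylow decomposition, and the rank-two quotient for the upper bound), you arrange the reductions differently and replace both of the paper's auxiliary lemmas with self-contained arguments. The paper first writes $G$ as the product of its Sylow subgroups and invokes a general lemma, $\sigma(A\times B)=\min\{\sigma(A),\sigma(B)\}$ for $A,B$ of coprime order, proved for arbitrary finite groups; only then does it pass to $G_p/\Phi(G_p)\cong C_p^{\,d}$, getting the lower bound $\sigma(C_p^{\,d})>p$ from the Minimal Index Lower Bound (Lemma \ref{minind}). You instead quotient by $\Phi(G)$ at the outset so that everything becomes a coprime product of elementary abelian groups, you re-derive the coprime splitting by hand in that special case (your observation that a family of maximal subgroups covers $V$ iff the hyperplanes attached to some prime $q$ already cover $V_q$ is exactly the paper's $\Omega_A/\Omega_B$ argument specialized to the abelian setting), and you prove $k\geq q+1$ by counting the points of $V_q\setminus H_1$ met by the remaining hyperplanes rather than via the index bound. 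Your route buys self-containedness (pure linear algebra over $\mathbb{F}_q$); the paper's route buys reusable general statements. Two steps you leave implicit are worth recording: that $\Phi(G)=\prod_q\Phi(G_q)$, so that indeed $G/\Phi(G)\cong\prod_q V_q$ (immediate here since every maximal subgroup of a coprime direct product has the form $M\times B$ or $A\times N$), and that in the lower bound the cover of $V_q$ may be assumed to consist of hyperplanes, which follows from your opening remark that minimal covers can be taken among maximal subgroups.
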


Let us first observe that if $G$ is any finite group and $\Phi(G)$ is the Frattini subgroup of $G$ (i.e. the intersection of the maximal subgroups of $G$) then $\sigma(G) = \sigma(G/\Phi(G))$. Indeed, in any minimal cover of $G$ consisting of maximal subgroups its members all contain the Frattini subgroup.

Now suppose $G$ is a non-cyclic $p$-group. It is well known that $G/\Phi(G) \cong {C_p}^d$ where $d$ is the smallest size of a subset of $G$ generating $G$. Therefore $\sigma(G) = \sigma({C_p}^d)$. The covering number of ${C_p}^d$ can be easily computed using the following basic lemma.

\begin{lemma}[Minimal Index Lower Bound] \label{minind}
Let $\mathcal{H}$ be a minimal cover of a finite group $T$. Then $$\min \{|T:H|\ :\ H \in \mathcal{H}\} < \sigma(T).$$
\end{lemma}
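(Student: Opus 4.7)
The plan is a direct double-counting argument. Let $\sigma = \sigma(T) = |\mathcal{H}|$, pick $H_0 \in \mathcal{H}$ achieving the minimum index $n := |T:H_0|$, and list the remaining members as $H_1, \dots, H_{\sigma-1}$. The goal is to show $n < \sigma$.

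First I would record the easy consequence of the choice of $H_0$: every other member of $\mathcal{H}$ has index at least $n$, hence satisfies $|H_i| \le |H_0|$. Next, since $H_0$ alone does not cover $T$, the complement $T \setminus H_0$ must be covered by $H_1 \cup \cdots \cup H_{\sigma-1}$. For each $i \ge 1$ the set $H_i \setminus H_0$ equals $H_i \setminus (H_i \cap H_0)$; because $H_i \cap H_0$ contains the identity, its cardinality is at most $|H_i| - 1 \le |H_0| - 1$.

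Combining these bounds with $|T \setminus H_0| = (n-1)|H_0|$, I would compare
\[
(n-1)|H_0| \;=\; |T \setminus H_0| \;\le\; \sum_{i=1}^{\sigma-1} |H_i \setminus H_0| \;\le\; (\sigma-1)(|H_0| - 1) \;<\; (\sigma-1)|H_0|,
\]
from which $n < \sigma$ follows immediately.

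There is no real obstacle in this argument; the entire content is packaged in the inequality above. The only subtle point is forcing a \emph{strict} inequality, which is furnished precisely by the $-1$ coming from $\{1\} \subseteq H_i \cap H_0$. It is worth noting that minimality of $\mathcal{H}$ is used only to cap the number of summands by $\sigma - 1$; the same reasoning would apply to any cover of $T$, and it is this flexibility that makes the lemma convenient when later bounding $\sigma(T)$ from below.
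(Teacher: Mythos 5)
Your proof is correct and is essentially the paper's own counting argument in lightly repackaged form: the paper bounds $|T| = |\bigcup_i H_i| < \sum_i |H_i| \leq \sigma(T)\,|T|/\beta_1$ (strictness because all members share the identity), while you count $T \setminus H_0$ against the remaining members, using the same two facts that the minimal-index member is the largest and that the identity in each intersection supplies the strict inequality. Nothing is missing.
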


\begin{proof}
Write $\mathcal{H} = \{H_1,\ldots,H_k\}$, $k=\sigma(T)$, $\beta_i := |T:H_i|$ with $\beta_1 \leq \cdots \leq \beta_k$. Since the union $H_1 \cup \cdots \cup H_k$ is not disjoint (because $1 \in H_i$ for $i=1,\ldots,k$), we have $$|T| = |\bigcup_{i=1}^k H_i| < \sum_{i=1}^k |H_i| = \sum_{i=1}^k |T|/\beta_i \leq k|T|/\beta_1.$$It follows that $\beta_1 < k = \sigma(T)$.
\end{proof}

Lemma \ref{minind} implies that $\sigma({C_p}^d) > p$. On the other hand, since $d > 1$ (because $G$ is non-cyclic), ${C_p}^d$ projects onto ${C_p}^2 = C_p \times C_p$, therefore $p < \sigma({C_p}^d) \leq \sigma(C_p \times C_p) = p+1$. We deduce that $\sigma(G) = \sigma({C_p}^d) = p+1$. Since any finite nilpotent group is the direct product of its Sylow subgroups, Proposition \ref{nilpotent} follows from the following lemma.

\begin{lemma}
Let $A,B$ be two finite groups of coprime order. Then $$\sigma(A \times B) = \min \{\sigma(A),\sigma(B)\}.$$
\end{lemma}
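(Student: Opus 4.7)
The plan is to prove $\sigma(A \times B) \leq \min\{\sigma(A), \sigma(B)\}$ and $\sigma(A \times B) \geq \min\{\sigma(A), \sigma(B)\}$ separately. The upper bound is immediate from the Remark at the start of the paper, since both $A$ and $B$ are quotients of $A \times B$ via the natural projections. I would dispose of the trivial case at the outset: if both factors are cyclic then $A \times B$ is itself cyclic (by coprimality) and both sides equal $\infty$ by convention; otherwise at least one of $\sigma(A), \sigma(B)$ is finite and there is something to prove.

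For the lower bound the key structural input is that coprimality of $|A|$ and $|B|$ forces every subgroup $H \leq A \times B$ to split as a direct product $A_0 \times B_0$ with $A_0 \leq A$ and $B_0 \leq B$. I would deduce this from Goursat's lemma: a subgroup of $A \times B$ corresponds to a 5-tuple $(A_1, A_2, B_1, B_2, \phi)$ with $A_2 \unlhd A_1 \leq A$, $B_2 \unlhd B_1 \leq B$ and an isomorphism $\phi \colon A_1/A_2 \to B_1/B_2$. The common order $|A_1/A_2|$ divides both $|A|$ and $|B|$, so divides $\gcd(|A|,|B|) = 1$, forcing $A_1 = A_2$, $B_1 = B_2$, and the subgroup to be $A_1 \times B_1$. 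In particular the maximal subgroups of $A \times B$ are exactly those of the form $M \times B$ (with $M$ maximal in $A$) and $A \times M'$ (with $M'$ maximal in $B$).

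With this classification in hand I would take a minimal cover $\mathcal{H}$ of $A \times B$ consisting of maximal subgroups, partition it as $\mathcal{H} = \mathcal{H}_A \sqcup \mathcal{H}_B$ according to the two types, and let $\mathcal{M}_A, \mathcal{M}_B$ denote the corresponding families of maximal subgroups of $A, B$. Since
\[
\bigcup \mathcal{H} \;=\; \Bigl(\bigcup \mathcal{M}_A\Bigr) \times B \;\cup\; A \times \Bigl(\bigcup \mathcal{M}_B\Bigr),
\]
this equals $A \times B$ if and only if the complement $\bigl(A \setminus \bigcup \mathcal{M}_A\bigr) \times \bigl(B \setminus \bigcup \mathcal{M}_B\bigr)$ is empty, i.e.\ iff $\mathcal{M}_A$ covers $A$ or $\mathcal{M}_B$ covers $B$. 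In the former case $|\mathcal{H}| \geq |\mathcal{M}_A| \geq \sigma(A)$, in the latter $|\mathcal{H}| \geq \sigma(B)$, yielding $\sigma(A \times B) \geq \min\{\sigma(A), \sigma(B)\}$. The only step requiring real care is the Goursat argument pinning down the subgroup structure; the final combinatorial dichotomy is then routine bookkeeping.
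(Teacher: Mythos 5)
Your proposal is correct and follows essentially the same route as the paper: both use coprimality to force every subgroup of $A \times B$ to be a direct product $C \times D$, split a minimal cover into the two resulting types of maximal subgroups, and observe that the uncovered set is a product $O_A \times O_B$, which must be empty, giving the dichotomy. Your explicit Goursat justification and the cyclic-case remark are details the paper leaves implicit, but the argument is the same.
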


\begin{proof}
Let $\pi_A: A \times B \to A$, $\pi_B: A \times B \to B$ be the canonical projections. Let $\mathcal{H}$ be a minimal cover of $A \times B$ consisting of maximal subgroups, and let $$\Omega_A := \{H \in \mathcal{H}\ :\ \pi_B(H)=B\}, \hspace{1cm} \Omega_B := \{H \in \mathcal{H}\ :\ \pi_A(H)=A\}.$$Since $|A|,|B|$ are coprime, any subgroup of $A \times B$ is of the form $C \times D$ with $C \leq A$ and $D \leq B$. It follows that $\mathcal{H} = \Omega_A \cup \Omega_B$. Let $$O_A := A - \bigcup_{C \times B \in \Omega_A} C, \hspace{1cm} O_B := B - \bigcup_{A \times D \in \Omega_B} D.$$Since $\mathcal{H}$ covers $A \times B$, it covers $O_A \times O_B$, so $O_A \times O_B = \emptyset$. Hence, either $O_A = \emptyset$, implying $\Omega_B = \emptyset$ by minimality of $\mathcal{H}$ and $\sigma(A \times B) = \sigma(A)$, or $O_B = \emptyset$, implying $\Omega_A = \emptyset$ by minimality of $\mathcal{H}$ and $\sigma(A \times B) = \sigma(B)$.
\end{proof}

\section{Direct products of groups}

The very first case to consider when dealing with Conjecture \ref{mainconj} is the direct product case. In a joint work with A. Lucchini we deal with this case. We prove

\begin{teor}[Lucchini A., Garonzi M. 2010 \cite{garluc}]
Let $\mathcal{M}$ be a minimal cover of  a direct product $G=H_1 \times H_2$ of two groups.
Then one of the following holds:
\begin{enumerate}
\item  $\mathcal{M}=\{X\times H_2\mid X\in \mathcal{X}\}$ where $\mathcal{X}$ is a minimal cover of $H_1.$ In this case  $\sigma(G)=\sigma(H_1).$
\item  $\mathcal{M}=\{H_1\times X\mid X\in \mathcal{X}\}$ where $\mathcal{X}$ is a minimal cover of $H_2.$ In this case  $\sigma(G)=\sigma(H_2).$
\item  There exist $N_1\trianglelefteq H_1,$ $N_2\trianglelefteq H_2$ with $H_1/N_1\cong H_2/N_2\cong C_p$
and $\mathcal{M}$ consists of the maximal subgroups of $H_1\times H_2$ containing $N_1\times N_2.$ In this case $\sigma(G)=p+1.$
\end{enumerate}
\end{teor}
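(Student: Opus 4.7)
My plan is to analyse $\mathcal{M}$ via Goursat's classification of the maximal subgroups of a direct product, then split on whether diagonal maximal subgroups occur. As in the proof of Proposition~\ref{nilpotent}, I replace $\mathcal{M}$ by a minimal cover of maximal subgroups. By Goursat's lemma, each maximal subgroup of $G=H_1\times H_2$ is of one of three types: (a) a \emph{first-factor type} $K\times H_2$ with $K$ maximal in $H_1$; (b) a \emph{second-factor type} $H_1\times K$ with $K$ maximal in $H_2$; or (c) a \emph{diagonal type} $\{(h_1,h_2):\alpha(h_1N_1)=h_2N_2\}$, where $N_i\trianglelefteq H_i$, $Q:=H_1/N_1\cong H_2/N_2$ is simple, and $\alpha$ is an isomorphism. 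A subgroup of type (c) has index $|Q|$ in $G$. Split $\mathcal{M}=\mathcal{M}_1\sqcup\mathcal{M}_2\sqcup\mathcal{D}$ along this trichotomy.

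If $\mathcal{D}=\emptyset$, the proof of the coprime lemma above applies essentially verbatim: define $O_i\subseteq H_i$ to be the part of $H_i$ not covered by $\mathcal{M}_i$; since $\mathcal{M}$ covers $G$, one has $O_1\times O_2=\emptyset$, so some $O_i$ is empty and then minimality forces $\mathcal{M}_{3-i}=\emptyset$. This yields case~(1) or~(2).

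The substantive case is $\mathcal{D}\neq\emptyset$, where the goal is case~(3). Fix $\Delta_0\in\mathcal{D}$ with data $(Q,N_1,N_2,\alpha_0)$, so that Lemma~\ref{minind} gives $|Q|=|G:\Delta_0|<\sigma(G)$. I aim to show that $\mathcal{M}$ is precisely the set of maximal subgroups of $G$ containing $N_1\times N_2$, and that $Q\cong C_p$. Once these are established, $G/(N_1\times N_2)\cong C_p\times C_p$ is partitioned (away from the identity) by its $p+1$ maximal subgroups, so their lifts form a cover of $G$ of size $p+1$; combined with the lower bound $p<\sigma(G)$, this forces $\sigma(G)=p+1$ and pins down $\mathcal{M}$ completely.

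The main obstacle is ruling out, simultaneously, diagonals with distinct kernel pairs $(N_1',N_2')\neq(N_1,N_2)$ and nonabelian-simple common quotients. I would attack both by passing to the quotient $G/(N_1\times N_2)$ and reducing to a cover of $Q\times Q$; the key observation is that the $|\Aut(Q)|+2$ maximal subgroups of $Q\times Q$ cover all of $Q\times Q$ only when $Q$ is cyclic of prime order (for nonabelian simple $Q$, an automorphism preserves element orders, so no diagonal contains a pair $(q_1,q_2)$ with $|q_1|\neq|q_2|$, and the axes alone never suffice). A careful comparison of indices across Goursat types, exploiting $|Q|<\sigma(G)$ together with the minimality of $\mathcal{M}$, then eliminates every configuration except the announced one.
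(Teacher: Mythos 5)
Your Goursat trichotomy is correct and your treatment of the case $\mathcal{D}=\emptyset$ (no diagonal members) is fine; it is the same $O_1\times O_2=\emptyset$ argument used in the paper's coprime lemma. But the case $\mathcal{D}\neq\emptyset$, which is the entire content of the theorem, is not actually proved. Two concrete problems. First, your appeal to Lemma \ref{minind} to get $|Q|=|G:\Delta_0|<\sigma(G)$ is a misapplication: that lemma bounds only the \emph{minimum} index over all members of the cover, and nothing forces that minimum to be attained at a diagonal member rather than at some $K\times H_2$. Second, and more seriously, the proposed reduction ``pass to $G/(N_1\times N_2)$ and analyze a cover of $Q\times Q$'' does not yield a cover of $Q\times Q$ by proper subgroups: members of $\mathcal{M}$ that do not contain $N_1\times N_2$ --- product-type subgroups $K\times H_2$ with $KN_1=H_1$, and diagonal members attached to a different kernel pair $(N_1',N_2')$ --- project onto all of $G/(N_1\times N_2)$, so the quotient carries no information about how those members contribute. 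Excluding precisely such mixed configurations (diagonals over several kernel pairs coexisting with product-type members) is the hard part of the theorem, and your outline addresses it only with the declaration that ``a careful comparison of indices \ldots eliminates every configuration,'' which is a statement of intent, not an argument.

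Relatedly, your order-theoretic observation (for nonabelian simple $Q$ no diagonal $\Delta_\varphi$ contains a pair of elements of different orders) only shows that the diagonals over one fixed kernel pair fail to cover \emph{by themselves}; it does not rule out that the missed elements are absorbed by product-type members or by diagonals over other kernels. Compare the paper's own sketch for $S\times S$ with $S$ nonabelian simple: there one picks $\omega$ not covered by the first-factor members, builds from the second-factor members and the cyclic subgroups $\langle\varphi(\omega)\rangle$ a cover of $S$ of size $|\mathcal{M}_2|+|\mathcal{M}_3|$, concludes $\mathcal{M}_1=\mathcal{M}_2=\emptyset$ by minimality, and only then obtains the contradiction $|S|<\sigma(S\times S)\leq\sigma(S)<|S|$ from Lemma \ref{minind}. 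A counting argument of this kind, carried out coset by coset and kernel pair by kernel pair (and this is the substantial part of \cite{garluc}), is exactly what is missing from your proposal; as written, the central case remains unproved.
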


We will now give the idea of how the proof goes when $H_1$ and $H_2$ are isomorphic non-abelian simple groups. This does not cover all the ideas of the proof but it covers quite well those used when $H_1$ and $H_2$ do not have common abelian factor groups.

\ 

Let $S$ be a non-abelian simple group. We want to prove that $\sigma(S \times S) = \sigma(S)$. Note that since $S$ is a quotient of $S \times S$, $\sigma(S \times S) \leq \sigma(S)$.

\begin{enumerate}

\item We know that the maximal subgroups of $S \times S$ are of the following three types: $$(1)\ K \times S, \hspace{1cm} (2)\ S \times K, \hspace{1cm} (3)\ \Delta_{\varphi}:=\{(x,\varphi(x))\ |\ x \in S\},$$where $K$ is a maximal subgroup of $S$ and $\varphi \in \text{Aut}(S)$.

\item Let $\mathcal{M} = \mathcal{M}_1 \cup \mathcal{M}_2 \cup \mathcal{M}_3$ be a minimal cover of $S \times S$, where $\mathcal{M}_i$ consists of subgroups of type $(i)$.

\item Let $\Omega := S \times S - \bigcup_{M \in \mathcal{M}_1 \cup \mathcal{M}_2} M = \Omega_1 \times \Omega_2$, where $\Omega_1 = S - \bigcup_{K \times S \in \mathcal{M}_1} K$ and $\Omega_2 = S - \bigcup_{S \times K \in \mathcal{M}_2} K$.

\item We claim that it is enough to prove that $\Omega = \emptyset$. Indeed if this is the case then either $\Omega_1 = \emptyset$, in which case $\bigcup_{K \times S \in \mathcal{M}_1} K = S$ and $\mathcal{M} = \mathcal{M}_1$ by minimality of $\mathcal{M}$, or $\Omega_2 = \emptyset$, in which case $\bigcup_{S \times K \in \mathcal{M}_2} K = S$, and $\mathcal{M} = \mathcal{M}_2$ by minimality of $\mathcal{M}$. In both cases we obtain $\sigma(S \times S) \geq \sigma(S)$ and hence $\sigma(S \times S) = \sigma(S)$. \\ Suppose by contradiction $\Omega \neq \emptyset$, i.e. $\Omega_1 \neq \emptyset \neq \Omega_2$, and let $\omega \in \Omega_1$.

\item The family $$\{ K<S\ |\ S \times K \in \mathcal{M}_2\} \cup \{\langle \varphi(\omega) \rangle\ |\ \Delta_{\varphi} \in \mathcal{M}_3\}$$ is a cover of $S$ of size $|\mathcal{M}_2|+|\mathcal{M}_3|$ (it consists of proper subgroups being $S$ non-abelian). Indeed, if $b \in S$ is such that $b \not \in K$ for any $K < S$ such that $S \times K \in \mathcal{M}_2$ then $(\omega,b) \in S \times S - \Omega_1 \times \Omega_2$ hence, being $\mathcal{M}$ a cover for $S \times S$, $(\omega,b) \in \Delta_{\varphi}$ for some $\varphi \in \Aut(S)$ such that $\Delta_{\varphi} \in \mathcal{M}_3$, and we conclude that $b = \varphi(\omega) \in \langle \varphi(\omega) \rangle$.

\item It follows that $$|\mathcal{M}_1| + |\mathcal{M}_2| + |\mathcal{M}_3| = |\mathcal{M}| = \sigma(S \times S) \leq \sigma(S) \leq |\mathcal{M}_2| + |\mathcal{M}_3|.$$ This implies that $\mathcal{M}_1=\emptyset$. Analogously $\mathcal{M}_2=\emptyset$. So $\mathcal{M}=\mathcal{M}_3$.

\item Observe that since $S$ is covered by its non-trivial cyclic subgroups, $\sigma(S) < |S|$. Since each member of $\mathcal{M}_3 = \mathcal{M}$ has index $|S|$, by the Minimal Index Lower Bound (Lemma \ref{minind}) $$|S| < \sigma(S \times S) \leq \sigma(S) < |S|,$$ a contradiction.

\end{enumerate}

\section{Sigma star}

Recall that a group $G$ is called ``primitive'' if it admits a core-free maximal subgroup, that is, a maximal subgroup $M$ such that $\bigcap_{g \in G} gMg^{-1} = \{1\}$. A primitive group has always at most two minimal normal subgroup, and if they are two, they are non-abelian.

Recall that a $G$-group is a group $A$ endowed with a homomorphism $f: G \to \text{Aut}(A)$. If $a \in A$ and $g \in G$, the element $f(g)(a)$ is usually denoted $a^g$ if no ambiguity is possible.

\begin{defi}
Let $G$ be a group, and let $A,B$ be two $G$-groups.
\begin{itemize}
\item $A,B$ are said to be $G$-isomorphic \index{$G$-isomorphic $G$-groups} (written $A \cong_G B$) if there exists an isomorphism $\varphi:A \to B$ such that $a^{\varphi g} = a^{g \varphi}$ for every $g \in G$.
\item $A,B$ are said to be $G$-equivalent \index{$G$-equivalent $G$-groups} (written $A \sim_G B$) if there exist isomorphisms $$\xymatrix{\varphi:A \ar[r] & B},\ \xymatrix{\Phi:G \ltimes A \ar[r] & G \ltimes B}$$ such that the following diagram commutes: $$\xymatrix{\{1\} \ar[r] & A \ar[r] \ar[d]^-{\varphi} & G \ltimes A \ar[r] \ar[d]^-{\Phi} & G \ar[r] \ar@{=}[d] & \{1\} \\ \{1\} \ar[r] & B \ar[r] & G \ltimes B \ar[r] & G \ar[r] & \{1\}}$$
\end{itemize}
\end{defi}

Let $N$ be a minimal normal subgroup of a group $G$. The conjugation action of $G$ on $N$ gives $N$ the structure of $G$-group. Define $I_G(N)$ to be the set of elements of $G$ which induce by conjugation an inner automorphism of $N$ and define $R_G(N)$ to be the intersection of the normal subgroups $K$ of $G$ contained in $I_G(N)$ with the property that $I_G(N)/K$ is non-Frattini (i.e. not contained in the Frattini subgroup of $G/K$) and $G$-equivalent to $N$.

Recall that the ``socle'' of a group $G$, denoted $\soc(G)$, is the subgroup of $G$ generated by the minimal normal subgroups of $G$. $\soc(G)$ is always a direct product of some minimal normal subgroups of $G$. $G$ is said to be ``monolithic'' if it admits a unique minimal normal subgroup, i.e. if $\soc(G)$ is a minimal normal subgroup of $G$.

\begin{teor}[Lucchini, Detomi \cite{cubo} Corollary 14] \label{struttura}
Let $H$ be a non-abelian $\sigma$-elementary group and let $N_1,\ldots,N_{\ell}$ be minimal normal subgroups of $H$ such that $\soc(H) = N_1 \times \cdots \times N_{\ell}$. Let $X_i := G/R_H(N_i)$ for $i=1,\ldots,\ell$. Then $X_i$ is a primitive monolithic group with socle isomorphic to $N_i$ for $i=1,\ldots,\ell$ ($X_i$ will be called ``\textit{the primitive monolithic group associated to} $N_i$'') and $H$ is a subdirect product of $X_1, \ldots, X_{\ell}$: the canonical homomorphism $$H \to X_1 \times \ldots \times X_{\ell}$$is injective.
\end{teor}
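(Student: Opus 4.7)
The plan is to verify the three components of the conclusion separately: (a) each $X_i = H/R_H(N_i)$ is monolithic with socle $H$-isomorphic to $N_i$; (b) each $X_i$ is primitive; (c) the diagonal map $H \to X_1 \times \cdots \times X_\ell$ is injective. The first two parts depend only on the definition of $R_H(N_i)$ and general properties of primitive groups, while the $\sigma$-elementary hypothesis enters only in (c).

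For (a) and (b), I would first note that $R_H(N_i)$ is an intersection of normal subgroups of $H$ and hence is itself normal, so $X_i$ is well-defined. Next I would check that $N_i \cap R_H(N_i) = 1$; this is forced because $R_H(N_i) \leq I_H(N_i)$ and the $H$-equivalence $I_H(N_i)/K \sim_H N_i$ built into the definition of $R_H(N_i)$ prevents the whole of $N_i$ from collapsing. This gives a copy of $N_i$ inside $X_i$, and the non-Frattini condition in the definition ensures that this copy is complemented (or at least supplemented) by a maximal subgroup of $X_i$ with trivial core, giving primitivity. Uniqueness of the minimal normal subgroup in $X_i$ would be established by a minimality argument: if $X_i$ had a second minimal normal subgroup, one could shrink $R_H(N_i)$ further, still keeping the $G$-equivalence and non-Frattini conditions, contradicting the definition of $R_H(N_i)$ as the \emph{intersection} of all such $K$.

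For (c), set $K := \bigcap_{i=1}^{\ell} R_H(N_i)$; this is the kernel of the map under consideration. The goal is $K = 1$. Suppose toward a contradiction that $K \neq 1$. Since $H$ is $\sigma$-elementary, $\sigma(H) < \sigma(H/K)$. I would aim to prove $\sigma(H/K) \leq \sigma(H)$, producing the desired contradiction. For this, choose a minimal cover of $H$ consisting of maximal subgroups and show that each of its members contains $K$, so that the cover descends to $H/K$ with the same cardinality. Containment of $K$ in a maximal subgroup $M$ would follow by showing $\mathrm{Core}_H(M) \geq R_H(N_i)$ for at least one index $i$: indeed $H/\mathrm{Core}_H(M)$ is a primitive monolithic group, and by the Jordan–Hölder theorem applied to a chief series of $H$ through $\soc(H)$, its socle is $H$-equivalent to some $N_i$, whence by the universal/minimal nature of $R_H(N_i)$ the core contains $R_H(N_i)$.

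The main obstacle is precisely this last assertion, which is the technical heart of the Lucchini–Detomi framework: the fact that $R_H(N_i)$ is universal among normal subgroups $K \trianglelefteq H$ with $H/K$ a primitive monolithic group whose socle is $H$-equivalent to $N_i$. This universality must be extracted from the definition by a careful case analysis according to whether $N_i$ is abelian or non-abelian, since in the abelian case $I_H(N_i) = C_H(N_i)$ and the $H$-equivalence encodes the complement structure of $N_i$, whereas in the non-abelian case one effectively has $R_H(N_i) = C_H(N_i)$ and the argument is more direct. Once this universality is in hand, parts (a), (b), and the crucial reduction in (c) all fall out, completing the proof.
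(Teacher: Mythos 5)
This theorem is not proved in the paper at all: it is quoted from Detomi--Lucchini \cite{cubo}, Corollary 14, so your argument has to stand on its own, and its central structural claim fails. You assert that monolithicity and primitivity of $X_i=H/R_H(N_i)$ follow from the definition of $R_H(N_i)$ alone, with $\sigma$-elementarity needed only for injectivity. That is false. In general $I_H(N_i)/R_H(N_i)$ is the \emph{crown} of $N_i$, a direct product of $k\geq 1$ chief factors $H$-equivalent to $N_i$, and $H/R_H(N_i)$ is the corresponding crown-based power $L_k$ of the monolithic primitive group $L$ associated to $N_i$; it is monolithic only when $k=1$. Concretely, take $H=\{(x,y)\in \Sym(5)\times \Sym(5)\ :\ xy^{-1}\in \Alt(5)\}$, $N_1=\Alt(5)\times 1$, $N_2=1\times \Alt(5)$. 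The diagonal $D=\{(x,x)\}$ is a core-free maximal subgroup, so $H$ is primitive with the two minimal normal subgroups $N_1,N_2$, which are therefore $H$-equivalent (though not $H$-isomorphic); hence both $K=N_1$ and $K=N_2$ are admissible in the definition of $R_H(N_1)$, so $R_H(N_1)=1$ and $X_1=H$ is \emph{not} monolithic and its socle is $N_1\times N_2\not\cong N_1$. This also shows why your ``shrink $R_H(N_i)$ further'' argument cannot work: $R_H(N_i)$ is an intersection of pairwise incomparable admissible $K$'s, and quotienting by such an intersection naturally produces several minimal normal subgroups; nothing in the definition forbids it. (Similarly, ``$R_H(N_i)=C_H(N_i)$ in the non-abelian case'' is equivalent to $k=1$, i.e.\ it assumes exactly what is to be proved.) The real content of Corollary 14 is that $\sigma$-elementarity forces $k=1$, and this needs a genuine covering-number argument (a crown-based power with $k\geq 2$ has a proper quotient with the same $\sigma$), not a formal minimality argument about the intersection defining $R$.

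The injectivity step has a gap as well. You reduce it to the claim that every maximal subgroup $M$ in a minimal cover satisfies $\mathrm{Core}_H(M)\supseteq R_H(N_i)$ for some $i$, ``because by Jordan--H\"older the socle of $H/\mathrm{Core}_H(M)$ is $H$-equivalent to some $N_i$.'' But the socle of a primitive quotient of $H$ is just some non-Frattini chief factor of $H$, and chief factors above the socle need not be $H$-equivalent (or even $H$-isomorphic) to any \emph{minimal normal} subgroup: already for the $\sigma$-elementary group $H=\Alt(4)$ and $M=C_2\times C_2$ one gets $H/\mathrm{Core}_H(M)\cong C_3$, whose socle is equivalent to no minimal normal subgroup of $H$. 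So the containment of $\bigcap_i R_H(N_i)$ in every member of a minimal cover is unproved, and in the source the injectivity is obtained inside the crown machinery (using $\Phi(H)=1$, the structure of $H/R_H(N_i)$ as a crown-based power, and $\sigma$-elementarity), not by descending a cover member by member. As written, the proposal therefore has genuine gaps at both the monolithicity and the injectivity steps; it would need to import (or reprove) the Detomi--Lucchini crown results rather than argue from the bare definition of $R_H(N_i)$.
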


\begin{defi}[Sigma star] \label{star}
Let $X$ be a primitive monolithic group, and let $N$ be its unique minimal normal subgroup. If $\Omega$ is an arbitrary union of cosets of $N$ in $X$ define $\sigma_{\Omega}(X)$ to be the smallest number of supplements of $N$ in $X$ needed to cover $\Omega$. If $\Omega = \{Nx\}$ we will write $\sigma_{Nx}(X)$ instead of $\sigma_{\{Nx\}}(X)$. Define $$\sigma^{\ast}(X) := \min \{\sigma_{\Omega}(X)\ |\ \Omega = \bigcup_i N \omega_i,\ \langle \Omega \rangle = X \}.$$
\end{defi}

\begin{prop}[Lucchini, Detomi \cite{cubo} Proposition 16] \label{sigmastar}
Let $H$ be a non-abelian $\sigma$-elementary group with socle $N_1 \times \cdots \times N_{\ell}$, $$H \leq_{\text{subd}} X_1 \times \ldots \times X_{\ell}$$as in Theorem \ref{struttura}. For $i=1,\ldots,\ell$ let $\ell_{X_i}(N_i)$ be the smallest primitivity degree of $X_i$, i.e. the smallest index of a proper supplement of $N_i$ in $X_i$. Then $\ell_{X_i}(N_i) \leq \sigma^{\ast}(X_i)$ for $i=1,\ldots,\ell$ and $$\sum_{i=1}^{\ell} \ell_{X_i}(N_i) \leq \sum_{i=1}^{\ell} \sigma^{\ast}(X_i) \leq \sigma(H).$$
\end{prop}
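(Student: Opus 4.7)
The middle inequality $\sum_i \ell_{X_i}(N_i) \leq \sum_i \sigma^{\ast}(X_i)$ follows by summing the pointwise bound $\ell_{X_i}(N_i) \leq \sigma^{\ast}(X_i)$, so I only need to establish this pointwise bound and the final inequality $\sum_i \sigma^{\ast}(X_i) \leq \sigma(H)$. The former is a coset-refinement of the Minimal Index Lower Bound (Lemma \ref{minind}); the latter is the core work and will use the $\sigma$-elementary hypothesis together with the subdirect structure from Theorem \ref{struttura}.

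To show $\ell_{X_i}(N_i) \leq \sigma^{\ast}(X_i)$, I would fix a generating union of cosets $\Omega = \bigcup_s \tilde{N}_i \omega_s$ in $X_i$ (with $\tilde{N}_i := \soc(X_i)$) realising $\sigma^{\ast}(X_i)=k$ via a cover by proper supplements $M_1,\ldots,M_k$ of $\tilde{N}_i$. Because $M_j \tilde{N}_i = X_i$, one has $|M_j \cap \tilde{N}_i| = |\tilde{N}_i|/[X_i:M_j]$, and $|M_j \cap \tilde{N}_i \omega|$ is either $0$ or $|M_j \cap \tilde{N}_i|$; picking any single coset $\tilde{N}_i \omega \subseteq \Omega$,
\[ |\tilde{N}_i| \leq \sum_{j=1}^{k} |M_j \cap \tilde{N}_i \omega| \leq k\cdot \frac{|\tilde{N}_i|}{\min_j [X_i:M_j]}, \]
so $\ell_{X_i}(N_i) \leq \min_j[X_i:M_j] \leq k$.

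For $\sum_i \sigma^{\ast}(X_i) \leq \sigma(H)$, I would take a minimal cover $\mathcal{M}$ of $H$ by maximal subgroups, $|\mathcal{M}|=\sigma(H)$. For each $i$, put $\mathcal{M}^{(i)} := \{M \in \mathcal{M} : N_i \leq M\}$ and $\mathcal{M}_i := \mathcal{M} \setminus \mathcal{M}^{(i)}$. Since $H$ is $\sigma$-elementary, $\sigma(H) < \sigma(H/N_i)$, so the family $\mathcal{M}^{(i)}$ (whose members are all pullbacks from $H/N_i$) cannot cover $H$; hence $\Omega_i^H := H \setminus \bigcup \mathcal{M}^{(i)}$ is a nonempty union of $N_i$-cosets covered by $\mathcal{M}_i$. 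Projecting along $\pi_i : H \twoheadrightarrow X_i = H/R_H(N_i)$ sends $\Omega_i^H$ to a union $\Omega_i$ of $\tilde{N}_i$-cosets covered by $\{\pi_i(M) : M \in \mathcal{M}_i\}$. Using the explicit description of $R_H(N_i)$ to exclude $\pi_i(M)=X_i$ for $M \in \mathcal{M}_i$, and a further application of $\sigma$-elementarity to rule out $\langle \Omega_i \rangle < X_i$, one should obtain $\sigma^{\ast}(X_i) \leq \sigma_{\Omega_i}(X_i) \leq |\mathcal{M}_i|$.

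The main obstacle is the summation step. The naive estimate gives $\sum_i |\mathcal{M}_i| = \sum_{M \in \mathcal{M}} |\{i : N_i \not\leq M\}|$, which need not be bounded by $|\mathcal{M}|$, since a diagonal-type maximal subgroup of the subdirect product $H \leq X_1\times\cdots\times X_\ell$ may fail to contain several $N_i$ at once. To close the gap I would assign to each $M \in \mathcal{M}$ with $\soc(H) \not\leq M$ a \emph{single} index $i(M)$ with $N_{i(M)} \not\leq M$, using the classification of maximal subgroups of a subdirect product of primitive monolithic groups (which underlies Theorem \ref{struttura}), and then re-run the construction above with $\mathcal{M}_i := \{M \in \mathcal{M} : i(M) = i\}$, so that $\sum_i |\mathcal{M}_i| \leq |\mathcal{M}| = \sigma(H)$. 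The crucial point to verify is that, after this charging, each corresponding $\Omega_i$ remains a generating union of $\tilde{N}_i$-cosets covered by the reduced family $\{\pi_i(M) : M \in \mathcal{M}_i\}$; this is where the subdirect structure is essential, and where I expect the proof to require its most delicate argument.
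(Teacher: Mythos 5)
You should know at the outset that the paper does not prove this proposition at all: it is imported from Detomi--Lucchini \cite{cubo} (their Proposition 16), so your attempt has to stand on its own. Your first step does stand: covering a single coset $N_i\omega\subseteq\Omega$ by proper supplements $M_1,\dots,M_k$ of the socle and counting $|M_j\cap N_i\omega|=|N_i|/[X_i:M_j]$ correctly gives $\ell_{X_i}(N_i)\le\min_j[X_i:M_j]\le\sigma^{\ast}(X_i)$, and the middle inequality is indeed just summation.

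The inequality $\sum_i\sigma^{\ast}(X_i)\le\sigma(H)$ is where the proposal has genuine gaps, which you partly acknowledge. (a) That $\pi_i(M)$ is a \emph{proper} supplement of $\soc(X_i)$ for every maximal $M$ with $N_i\not\le M$ is exactly the crown-theoretic fact that such an $M$ contains $R_H(N_i)$; you only gesture at ``the explicit description of $R_H(N_i)$'', but without this $MR_H(N_i)=H$ is possible a priori and your projected family may contain $X_i$ itself. (b) The generation condition $\langle\Omega_i\rangle=X_i$, needed even to compare with $\sigma^{\ast}(X_i)$, does not follow from ``$\sigma$-elementarity'' as you claim; the working lever is minimality of the cover (if $\langle\Omega_i\rangle<X_i$, replace $\mathcal{M}_i$ by the single subgroup $\pi_i^{-1}(\langle\Omega_i\rangle)$ to beat $\sigma(H)$, after ruling out $|\mathcal{M}_i|\le 1$, which is impossible since a full $N_i$-coset inside one subgroup $M$ forces $N_i\le M$). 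This is fixable but absent. (c) Most seriously, the summation step, which you yourself flag as unverified: your proposed ``charging'' of each $M$ to a single index destroys the covering property you need, since after discarding members charged to other indices nothing guarantees the reduced family still covers $\Omega_i$ (nor that $\Omega_i$ still generates); the proof is incomplete precisely at its crux. The cleaner route is to show no charging is needed: a maximal subgroup $M$ of $H$ can fail to contain at most one $N_i$, because if $N_i,N_j\not\le M$ then the images of $N_i,N_j$ in the primitive group $H/M_H$ (where $M_H$ is the normal core of $M$) either coincide or are the two minimal normal subgroups of a primitive group with two minimal normal subgroups, and in either case $N_i\sim_H N_j$; this is excluded since Theorem \ref{struttura} makes $X_i=H/R_H(N_i)$ monolithic with socle isomorphic to $N_i$, whereas an $H$-equivalent second minimal normal subgroup would enlarge the crown $I_H(N_i)/R_H(N_i)$. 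With that, your families $\mathcal{M}_i$ are automatically pairwise disjoint and $\sum_i|\mathcal{M}_i|\le\sigma(H)$ falls out; as written, your argument identifies the right skeleton but closes none of (a)--(c).
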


\begin{prop}[\cite{cubo}, Proposition 10] \label{abmns}
Let $G$ be a finite group. If $V$ is a complemented normal abelian subgroup of $G$ and $V \cap Z(G) = \{1\}$ then $\sigma(G) \leq 2|V|-1$.
\end{prop}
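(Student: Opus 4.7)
The plan is to use the semidirect product decomposition $G = V \rtimes H$ provided by the given complement of $V$, and exhibit an explicit cover of $G$ by two families of proper subgroups: the centralizers $\mathcal{C} := \{C_G(v) : v \in V \setminus \{1\}\}$, of size $|V|-1$, and the $V$-conjugates $\mathcal{D} := \{v^{-1}Hv : v \in V\}$, of size at most $|V|$. The hypothesis $V \cap Z(G) = \{1\}$ guarantees that every member of $\mathcal{C}$ is proper, and the non-triviality of $V$ (without which the bound $2|V|-1 = 1$ is vacuous) guarantees that $H$, and hence every conjugate in $\mathcal{D}$, is proper. Coincidences among the $H^v$ only improve the count, so summing naively gives $|\mathcal{C}| + |\mathcal{D}| \leq 2|V|-1$.

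The core of the argument is the verification that $\mathcal{C} \cup \mathcal{D}$ covers $G$. Write a generic element as $g = wh$ with $w \in V$, $h \in H$. Using that $V$ is abelian and normal, a direct calculation yields two equivalences: (i) $g \in C_G(w')$ if and only if $h w' h^{-1} = w'$, i.e.\ $w' \in C_V(h)$; and (ii) $g \in v^{-1} H v$ if and only if $[v,h] = w^{-1}$. The key observation is that the commutator map $\phi_h : V \to V$ defined by $\phi_h(v) = [v,h]$ is a group homomorphism (this is where $V$ being abelian is essential), with $\ker \phi_h = C_V(h)$.

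Now the dichotomy is complementary. If $C_V(h) \neq \{1\}$, pick any non-trivial $w' \in C_V(h)$; by (i), $g \in C_G(w') \in \mathcal{C}$. If instead $C_V(h) = \{1\}$, then $\phi_h$ is injective on the finite group $V$, hence surjective, so there exists $v \in V$ with $[v,h] = w^{-1}$, and by (ii), $g \in v^{-1} H v \in \mathcal{D}$. Either way $g$ is covered, and the bound $\sigma(G) \leq 2|V|-1$ follows. The only real technical step is the commutator identity and verifying that $\phi_h$ is a homomorphism; everything else is routine bookkeeping, and no delicate counting of coincidences or conjugacy classes is needed.
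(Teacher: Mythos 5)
Your proof is correct and uses essentially the same cover as the paper: since $V$ is abelian, $C_G(v) = C_H(v)V$ for $1 \neq v \in V$, so your family $\{C_G(v) : 1 \neq v \in V\} \cup \{H^v : v \in V\}$ coincides with the one sketched in the paper's proof. You have simply supplied the details (the homomorphism $\phi_h$ and the dichotomy on $C_V(h)$) that the paper omits.
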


\begin{proof}
Let $H$ be a complement of $V$ in $G$. The idea is to show that $G$ is covered by the family $\{H^v\ |\ v \in V\} \cup \{C_H(v)V\ |\ 1 \neq v \in V\}$. We omit the details.
\end{proof}

\section{Small covering numbers}

The content of this section is included in my Ph.D. thesis.

\begin{lemma} \label{spancoprime}
Let $N$ be a normal subgroup of a group $X$. If a set of subgroups of $X$ covers a coset $yN$ of $N$ in $X$, then it also covers every coset $y^{\alpha}N$ with $\alpha$ prime to $|y|$.
\end{lemma}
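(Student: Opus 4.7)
My plan is to fix an arbitrary element $z \in y^{\alpha}N$ and exhibit a subgroup of the given cover that contains $z$. The whole argument rests on finding a single integer $k$ with two properties: (a) $z^{k} \in yN$, so that $z^{k}$ is covered by some $H$ in the family; and (b) $\gcd(k,|z|)=1$, so that $z$ itself lies in $\langle z^{k} \rangle \subseteq H$.

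Property (a) is controlled by the image in $X/N$. Let $m$ denote the order of the coset $yN$ in $X/N$. Since $z \equiv y^{\alpha} \pmod{N}$, we have $z^{k}N = y^{\alpha k}N$, which equals $yN$ precisely when $\alpha k \equiv 1 \pmod{m}$. Because $m$ divides $|y|$ and $\gcd(\alpha,|y|)=1$, also $\gcd(\alpha,m)=1$, so this congruence for $k$ is solvable, and any $k$ in the residue class $\alpha^{-1} \pmod{m}$ works.

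Property (b) is then arranged by choosing a suitable representative of that residue class, which is where the Chinese Remainder Theorem enters. For any prime $p \mid |z|$ which also divides $m$, the condition $k \equiv \alpha^{-1} \pmod{m}$ automatically forces $p \nmid k$, since $\gcd(\alpha^{-1},m)=1$. For primes $p \mid |z|$ coprime to $m$, the residue of $k$ modulo $p$ is unconstrained by (a), so I can arrange $p \nmid k$; CRT assembles these finitely many independent local choices into a single $k$.

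With such $k$ in hand, $z^{k} \in yN$ lies in some $H$ of the cover by hypothesis; since $\gcd(k,|z|)=1$ implies $\langle z^{k} \rangle = \langle z \rangle$, we conclude $z \in H$. I do not foresee a serious obstacle: the hypothesis $\gcd(\alpha,|y|)=1$ is precisely what lets us invert $\alpha$ modulo $m$, and the only delicate point is the brief CRT adjustment to secure (a) and (b) simultaneously. The conceptual content of the lemma is simply the remark that containment in a subgroup is a property of the cyclic group generated by the element, so covering a coset propagates to any coset whose generator generates the same cyclic piece of $X/N$.
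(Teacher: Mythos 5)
Your proof is correct. The underlying mechanism is the same as in the paper --- raise an element of $y^{\alpha}N$ to a suitable power so that it lands in $yN$, then recover the element from that power using coprimality with its order --- but the number-theoretic input is genuinely different. The paper fixes $s$ with $s\alpha \equiv 1 \pmod{|y|}$ and invokes Dirichlet's theorem on primes in arithmetic progressions to find a single prime $p > |X|$ with $p \equiv s \pmod{|y|}$; then for every $g \in y^{\alpha}N$ one has $g^p \in yN$ and $g = (g^p)^r$ where $pr \equiv 1 \pmod{|X|}$, so one exponent serves uniformly for the whole coset. You instead work element by element: for each $z \in y^{\alpha}N$ you build, via the Chinese Remainder Theorem, an exponent $k \equiv \alpha^{-1} \pmod{m}$ (where $m$ is the order of $yN$ in $X/N$, which divides $|y|$) that is coprime to $|z|$, and conclude from $\langle z^{k} \rangle = \langle z \rangle$ that the subgroup containing $z^{k}$ also contains $z$; your local analysis (primes dividing both $|z|$ and $m$ are automatically excluded from $k$, the remaining primes are handled by independent congruences) is sound. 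Your route is more elementary (no Dirichlet) and uses only the finiteness of the orders of $y$ and $z$ rather than of $X$, so it would even apply to torsion elements of infinite groups; the paper's version buys a single exponent valid for all elements of the coset at once (the $p$-th power map is a bijection of $X$ preserving every subgroup), which is slicker to state but rests on a much heavier theorem --- one that, as your argument implicitly shows, could itself be replaced by a CRT construction here.
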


\begin{proof}
Let $s$ be an integer such that $s \alpha \equiv 1 \mod |y|$. As $s$ is prime to $|y|$, by Dirichlet's theorem there exist infinitely many primes in the arithmetic progression $\{s+|y|n\ |\ n \in \mathbb{N}\}$; we choose a prime $p > |X|$ in $\{s + |y|n\ |\ n \in \mathbb{N}\}$. Clearly, $y^p=y^s$. As $p$ is prime to $|X|$, there exists an integer $r$ such that $pr \equiv 1 \mod |X|$. Hence, if $yN \subseteq \cup_{i \in I} M_i$, for every $g \in y^{\alpha}N$ we have that $g^p \in (y^{\alpha})^p N = (y^{\alpha})^s N = yN \subseteq \cup_{i \in I} M_i$ and therefore also $g=(g^p)^r$ belongs to $\cup_{i \in I}M_i$.
\end{proof}

\begin{prop} \label{56}
Let $H$ be a non-abelian $\sigma$-elementary group such that $\sigma(H) \leq 55$. Then $H$ is primitive and monolithic.
\end{prop}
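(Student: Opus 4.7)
My plan is to invoke the Lucchini--Detomi structure theorem (Theorem \ref{struttura}): writing $\soc(H) = N_1 \times \cdots \times N_\ell$, $H$ embeds as a subdirect product of the primitive monolithic groups $X_i$ with socle $N_i$. The proposition then splits into two parts: showing $\ell = 1$ (monolithic), and then showing $H$ admits a core-free maximal subgroup (primitive).

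Primitivity follows easily once monolithicity is known. Let $N$ denote the unique minimal normal subgroup of $H$. If $N \le \Phi(H)$ then every maximal subgroup of $H$ contains $N$, so every minimal cover by maximal subgroups lifts from a cover of $H/N$, giving $\sigma(H) \ge \sigma(H/N)$; combined with the opposite inequality from the initial Remark this yields $\sigma(H) = \sigma(H/N)$, contradicting $\sigma$-elementarity. Hence there exists a maximal subgroup $M$ of $H$ with $N \not\le M$; since $N$ is the only minimal normal subgroup of $H$, the core of $M$ is trivial, so $H$ is primitive.

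The heart of the argument is $\ell = 1$. Suppose $\ell \ge 2$ for contradiction. Proposition \ref{sigmastar} provides
$$\sum_{i=1}^\ell \ell_{X_i}(N_i) \le \sum_{i=1}^\ell \sigma^{\ast}(X_i) \le \sigma(H) \le 55.$$
For each non-abelian $N_i \cong T^{k_i}$ (with $T$ non-abelian simple) the summand $\ell_{X_i}(N_i)$ is at least $5$, the minimal index of a proper subgroup of $A_5$, and is much larger if $T$ is not $A_5$ or $k_i \ge 2$; for each abelian $N_i$ of order $p_i^{a_i}$ the group $X_i$ is affine and $\ell_{X_i}(N_i) = p_i^{a_i} \ge 2$. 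This restricts the possible multisets $\{X_i\}$ to an explicit finite list.

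To eliminate the surviving configurations I would combine two ingredients. The first is Proposition \ref{abmns}: any abelian $N_i$ appearing in $\soc(H)$ is, once one verifies that it is complemented in $H$ and intersects $Z(H)$ trivially, subject to the upper bound $\sigma(H) \le 2|N_i| - 1$. Comparing this with the lower bound from Proposition \ref{sigmastar} forces $\sum_{j \ne i} \ell_{X_j}(N_j) \le |N_i| - 1$, a constraint which, applied to each abelian summand in turn, rules out most mixed configurations. The second ingredient is case-by-case inspection of the small primitive monolithic $X_i$ left standing, using explicit values of $\sigma^{\ast}(X_i)$ and known covering numbers of small almost-simple groups, together with the fact that $H$ is non-abelian (which bars the configuration in which every $X_i$ is cyclic of prime order). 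The main obstacle is verifying the complementation hypothesis required by Proposition \ref{abmns}: complementation and triviality of $N_i \cap Z(H)$ follow inside $X_i$ from its primitivity, but they must be lifted to $H$ through the subdirect embedding, and this is where I expect the bulk of the technical work to go.
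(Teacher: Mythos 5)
There is a genuine gap, and it sits exactly where you defer the work. Your skeleton (Theorem \ref{struttura} plus the inequality $\sum_i \ell_{X_i}(N_i) \leq \sigma(H) \leq 55$ from Proposition \ref{sigmastar}) matches the paper's starting point, but with only the crude bounds $\ell_{X_i}(N_i) \geq 5$ (non-abelian case) and $\geq 2$ (abelian case) your ``explicit finite list'' is the set of \emph{all} primitive monolithic groups of primitivity degree up to about $50$, and these are not just small almost simple groups: the socle may be $S^m$ with $m \geq 2$ (for instance $A_5 \wr C_2$ has primitivity degree $25$, and its covering number $57$ is itself Theorem \ref{a5wrc2}, a separate paper). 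So the proposed elimination ``using explicit values of $\sigma^{\ast}(X_i)$ and known covering numbers of small almost-simple groups'' is not an available tool; it presupposes research-level computations and in any case addresses the wrong class of groups. The paper avoids this enumeration entirely with a uniform trick you are missing: if the non-abelian $X_1=G$ has primitivity degree $\ell_G(N) \leq 27$, inspection shows that either $\ell_G(N) \geq 10$ while $G/N \in \{C_2 \times C_2, \Sym(3), D_8\}$ (killed by $\ell_G(N) \leq \sigma(H) \leq \sigma(G) \leq \sigma(G/N) \leq 4$), or $G/N$ is cyclic of prime-power order; in the latter case Lemma \ref{spancoprime} gives $\sigma(G) \leq \sigma^{\ast}(G)+1$, and then $\sigma^{\ast}(X_1)+\sigma^{\ast}(X_2) \leq \sigma(H) \leq \sigma(X_1) \leq \sigma^{\ast}(X_1)+1$ forces $\sigma^{\ast}(X_2) \leq 1$, an immediate contradiction --- no covering number is ever computed. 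With that case disposed of, one may assume $\ell_{X_i}(N_i) \geq 28$ for every non-abelian $N_i$, and the remaining arithmetic ($28+28=56>55$; in the mixed case $28+|N_2| \leq \sigma(H) \leq \sigma(X_2) < 2|N_2|$, forcing $\sigma(H) > 56$) finishes the proof and explains where the threshold $55$ comes from, which your plan never does.

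A secondary point: the difficulty you flag as ``the bulk of the technical work'' --- lifting the complementation and $N_i \cap Z(H)=1$ hypotheses of Proposition \ref{abmns} from $X_i$ to $H$ --- is a self-inflicted one. Apply Proposition \ref{abmns} inside the quotient $X_2 = H/R_H(N_2)$, where primitivity gives the hypotheses for free, and then use $\sigma(H) \leq \sigma(X_2)$ because $X_2$ is an epimorphic image of $H$; this is exactly what the paper does, and it removes that obstacle entirely. Your primitivity argument (if $N \leq \Phi(H)$ then $\sigma(H)=\sigma(H/N)$, contradicting $\sigma$-elementarity, so some maximal subgroup avoids $N$ and is core-free) is fine, but it is the easy part of the statement.
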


\begin{proof}

We will use the notations of Theorem \ref{struttura}.

It is proven in \cite{cubo} that any non-abelian $\sigma$-elementary group has at most one abelian minimal normal subgroup. Therefore we may assume that there exists a non-abelian minimal normal subgroup $N$ of $H$. Let $G$ be the primitive monolithic group associated to $N$. If $G$ has a primitivity degree at most $27$ then either $\ell_G(N) \geq 10$ and $G/N \in \{C_2 \times C_2,\Sym(3),D_8\}$ (by inspection) - contradicting the inequality $\ell_G(N) \leq \sigma(H) \leq \sigma(G)$ (being $\sigma(C_2 \times C_2) = \sigma(D_8) = 3$ and $\sigma(S_3)=4$) - or $G/N$ is cyclic of prime-power order. Assume the latter case holds. Then $G/N$ admits only one maximal subgroup. In other words, a subset of $G$ generates $G$ modulo $N$ if and only if it contains an element $g \in G$ such that $G/N = \langle gN \rangle$. Thus Lemma \ref{spancoprime} implies that $\sigma(G) \leq \sigma^{\ast}(G)+1$, so that $$\sigma^{\ast}(X_1) + \sigma^{\ast}(X_2) \leq \sigma(H) \leq \sigma(X_1) \leq \sigma^{\ast}(X_1) + 1.$$In particular $\ell_{X_2}(N_2) \leq \sigma^{\ast}(X_2) \leq 1$, and this is a contradiction ($\ell_{X_2}(N_2)$ is the index of a proper subgroup of $X_2$).

Therefore we may assume that $\ell_G(N) \geq 28$ whenever $N$ is a non-abelian minimal normal subgroup of $G$. Suppose $H$ has at least two minimal normal subgroups $N_1=N, N_2$. If $N_2$ is non-abelian then by assumption $\ell_{X_2}(N_2) \geq 28$ and Proposition \ref{sigmastar} implies $56 \leq \ell_{X_1}(N_1) + \ell_{X_2}(N_2) \leq \sigma(H)$, a contradiction. Hence $N_2$ is abelian. We have $\ell_{X_2}(N_2) = |N_2|$ and by Proposition \ref{sigmastar} and Proposition \ref{abmns} $$28 + |N_2| \leq \ell_{X_1}(N_1) + \ell_{X_2}(N_2) \leq \sigma(H) \leq \sigma(X_2) < 2 |N_2|,$$ therefore $\sigma(H)-28 \geq |N_2| > \frac{1}{2} \sigma(H)$, and this implies $\sigma(H) > 56$, a contradiction.

\end{proof}

Proposition \ref{56} allows us to list the $\sigma$-elementary groups with small covering number. Indeed, if $H$ is a $\sigma$-elementary group such that $\sigma(H) \leq 55$ then $H$ is a primitive monolithic group with a primitivity degree at most $55$ (cf. Proposition \ref{sigmastar}). Since there are only finitely many groups of a given primitivity degree, we are reduced to look at a finite list of groups. By giving bounds to their covering numbers we can list the $\sigma$-elementary groups $G$ with $\sigma(G) \leq 25$. The explicit bounds can be found in \cite{gar}.

\begin{table}
\label{table25}
\begin{tabular}{|c|c|}
\hline $\sigma$ & $\text{Groups}$ \\ \hline $3$ & $C_2 \times C_2$ \\ \hline $4$ & $C_3 \times C_3,\Sym(3)$ \\ \hline $5$ & $\Alt(4)$ \\ \hline $6$ & $C_5 \times C_5,D_{10},AGL(1,5)$ \\ \hline $7$ & $\emptyset$ \\ \hline $8$ & $C_7 \times C_7,D_{14},7:3,AGL(1,7)$ \\ \hline $9$ & $AGL(1,8)$ \\ \hline $10$ & $3^2:4,AGL(1,9),\Alt(5)$ \\ \hline $11$ & $\emptyset$ \\ \hline $12$ & $C_{11} \times C_{11},11:5,D_{22},AGL(1,11)$ \\ \hline $13$ & $\Sym(6)$ \\ \hline $14$ & $C_{13} \times C_{13},D_{26},13:3,13:4,13:6,AGL(1,13)$ \\ \hline $15$ & $SL(3,2)$ \\ \hline $16$ & $\Sym(5),\Alt(6)$ \\ \hline $17$ & $2^4:5,AGL(1,16)$ \\ \hline $18$ & $C_{17} \times C_{17},D_{34},17:4,17:8,AGL(1,17)$ \\ \hline $19$ & $\emptyset$ \\ \hline $20$ & $C_{19} \times C_{19},AGL(1,19),D_{38},19:3,19:6,19:9$ \\ \hline $21$ & $\emptyset$ \\ \hline $22$ & $\emptyset$ \\ \hline $23$ & $M_{11}$ \\ \hline $24$ & $C_{23} \times C_{23},D_{46},23:11,AGL(1,23)$ \\ \hline $25$ & $\emptyset$ \\ \hline
\end{tabular}
\caption{The list of $\sigma$-elementary groups $G$ with $3 \leq \sigma(G) \leq 25$.}
\end{table}

In general, the following fact holds.

\begin{prop}
For every fixed positive integer $n$, the set of $\sigma$-elementary groups $H$ with $\sigma(H) = n$ is finite, bounded by a function of $n$.
\end{prop}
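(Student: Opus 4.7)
My plan is to bound $|H|$ by a function of $n$, from which the finiteness statement follows immediately since there are only finitely many groups of any bounded order. I will split into the abelian and non-abelian cases.

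If $H$ is abelian, Proposition \ref{nilpotent} gives $\sigma(H) = p+1$ where $p$ is the smallest prime with non-cyclic Sylow subgroup, and any such $H$ has $C_p \times C_p$ as a quotient (from the Frattini quotient of its non-cyclic Sylow $p$-subgroup) with the same covering number $p+1$. The $\sigma$-elementary hypothesis therefore forces $H \cong C_p \times C_p$; in particular $n = p+1$ determines $H$ up to isomorphism.

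For non-abelian $H$, I would invoke Theorem \ref{struttura} to write $H$ as a subdirect product of primitive monolithic groups $X_1, \ldots, X_{\ell}$, where $X_i$ corresponds to the $i$-th minimal normal subgroup $N_i$ of $H$. Proposition \ref{sigmastar} then gives $\sum_{i=1}^{\ell} \ell_{X_i}(N_i) \leq \sigma(H) = n$, and since each summand is at least $2$, this already forces $\ell \leq n/2$. It then suffices to bound $|X_i|$ by a function of $n$ for each $i$, because $|H|$ divides $\prod_i |X_i|$.

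To bound $|X_i|$, I would observe, arguing as in the proof of Proposition \ref{56}, that every proper supplement of $N_i$ in $X_i$ is contained in a core-free maximal subgroup of no larger index; hence $\ell_{X_i}(N_i)$ equals the minimal degree of a faithful primitive permutation representation of $X_i$. If $N_i$ is non-abelian, this embeds $X_i$ into $\Sym(n)$ and yields $|X_i| \leq n!$. If $N_i$ is abelian --- and by the fact from \cite{cubo} recalled in Proposition \ref{56} there is at most one such index $i$ --- then $\ell_{X_i}(N_i) = |N_i|$, so $|N_i| \leq n$, and primitivity with abelian socle forces $X_i/N_i \hookrightarrow \Aut(N_i)$, bounding $|X_i|$ by $n \cdot |\Aut(N_i)|$, which is a function of $n$. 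I expect no serious obstacle: the nontrivial ingredients (the subdirect decomposition, the $\sigma^{\ast}$ lower bound, and the uniqueness of an abelian minimal normal factor) are already in hand. The only step requiring a moment of care is identifying $\ell_{X_i}(N_i)$ with the minimal faithful primitive degree in the non-abelian socle case, which is what unlocks the elementary bound $|X_i| \leq \ell_{X_i}(N_i)!$.
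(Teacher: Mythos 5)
Your proposal is correct and takes essentially the same route as the paper: the subdirect embedding from Theorem \ref{struttura} together with the inequality $\sum_{i}\ell_{X_i}(N_i)\leq\sigma(H)$ from Proposition \ref{sigmastar}, and then the finiteness of primitive groups of bounded primitivity degree, which you simply make explicit via the embedding of each $X_i$ into $\Sym(n)$ (giving $|X_i|\leq n!$). Your separate treatment of the abelian case and the identification of $\ell_{X_i}(N_i)$ with the minimal faithful primitive degree are correct but are just more detailed versions of what the paper's proof leaves implicit.
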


\begin{proof}
We will use the notations of Theorem \ref{struttura}. Let $H$ be a $\sigma$-elementary group, and write $\soc(H) = N_1 \times \ldots \times N_{\ell}$. Let $X_1,\ldots,X_{\ell}$ be the primitive monolithic groups associated to $N_1,\ldots,N_{\ell}$ respectively. $H$ embeds in $X_1 \times \ldots \times X_{\ell}$, so in order to conclude it suffices to bound the number of possibilities for $\ell$ and each $X_i$ in terms of $\sigma(H)$. By Proposition \ref{sigmastar} $$\ell \leq \sum_{i=1}^{\ell} \ell_{X_i}(N_i) \leq \sum_{i=1}^{\ell} \sigma^{\ast}(X_i) \leq \sigma(H).$$Since there are finitely many primitive groups with a given primitivity degree, the result follows.
\end{proof}

\section{Considering some monolithic groups}

The content of this section is included in my Ph.D. thesis.

Proposition \ref{56} holds also for $56$, but for this number a quite different argument is needed. This is interesting because of the following result, which is \cite[Theorem 2]{gar2}. Here $A_5 \wr C_2$ denotes the wreath product of $A_5$ with $C_2$, i.e. the semidirect product $(A_5 \times A_5) \rtimes C_2$ with the action of $C_2 = \langle \varepsilon \rangle$ on $A_5 \times A_5$ given by $(x,y)^{\varepsilon} = (y,x)$.

\begin{teor}[\cite{gar2} Theorem 2] \label{a5wrc2}
$\sigma(A_5 \wr C_2) = 1 + 4 \cdot 5 + 6 \cdot 6 = 57$.
\end{teor}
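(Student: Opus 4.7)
The plan is to analyze $G := A_5 \wr C_2$ via the invariant $\sigma^\ast$ of Definition~\ref{star}. Write $N := A_5 \times A_5$, the unique minimal normal subgroup of $G$, so $G/N \cong C_2 = \langle \varepsilon N\rangle$. Because $G/N$ is cyclic of prime power order, Lemma~\ref{spancoprime} applied as in the proof of Proposition~\ref{56} yields $\sigma(G) \leq \sigma^\ast(G) + 1$. The aim is therefore to prove $\sigma^\ast(G) = 56 = 4\cdot 5 + 6\cdot 6$ together with a matching lower bound $\sigma(G) \geq 57$; the ``$+1$'' accounts for the unique index-$2$ subgroup $N$, which is not a supplement and so must either appear in a minimal cover or be paid for by extra supplements.

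For the upper bound I would exhibit a family of $56$ supplements of $N$ whose union contains the coset $N\varepsilon$. The key observation is that $((a,b)\varepsilon)^2 = (ab,ba)$, so $(a,b)\varepsilon$ has order $2|ab|$ with $|ab|\in\{1,2,3,5\}$; since $|A_4\wr C_2|$ is coprime to $5$ and $|D_{10}\wr C_2|$ is coprime to $3$, pairs with $|ab|=5$ can only be covered by $D_{10}$- (or larger-) type supplements, and pairs with $|ab|=3$ only by $A_4$- (or smaller-) type supplements. Mimicking the optimal cover $\sigma(A_5) = 10 = 4 + 6$ from Table~\ref{table25} (four of the five $A_4$-conjugates suffice to cover $A_5$ outside the $5$-cycles, while all six $D_{10}$-conjugates are needed for the $5$-cycles), I would take $4 \cdot 5 = 20$ supplements built from $A_4$-type maximal subgroups --- four chosen $A_4$-conjugates in the first coordinate, each paired with the five $A_4$-conjugates in the second coordinate via an appropriate twist $(c,d)\varepsilon$ --- and all $6 \cdot 6 = 36$ conjugates of $D_{10}\wr C_2$ in $G$. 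Adjoining $N$ itself yields a cover of size $1 + 20 + 36 = 57$.

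For the matching lower bound $\sigma(G) \geq 57$, split by whether $N$ belongs to a hypothetical minimal cover $\mathcal{C}$: if yes, the remaining members are supplements covering $N\varepsilon$ and $|\mathcal{C}| \geq 1 + \sigma^\ast(G)$; if not, a parallel analysis of the covering of $N$ by subgroups of $N$ and supplements of $N$ yields the same estimate. The crux is then $\sigma^\ast(G) \geq 56$: classify each supplement $H$ by $H \cap N$ (product type $M_1 \times M_2$ with $|M|^2 \in \{144, 100, 36\}$, diagonal type of order $60$, or smaller) and apply a weighted counting inequality stratified by the conjugacy class of $ab$. Two cleanly separable estimates carry most of the weight: the $1440$ pairs with $|ab|=5$ force at least $36$ supplements of $D_{10}$-type (each contains exactly $|D_{10}| \cdot 4 = 40$ such pairs), and a more subtle combinatorial argument for the $1200$ pairs with $|ab|=3$ --- essentially lifting the lower bound $\sigma(A_5) \geq 10$ coordinate-wise --- forces at least $20$ further supplements of $A_4$-type.

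\textbf{The main obstacle} is precisely this lower bound on $\sigma^\ast(G)$, and in particular ruling out covers mixing product-type and diagonal-type supplements. A naive bound based on the maximum intersection size gives only $\sigma^\ast(G) \geq 3600/144 = 25$, far too weak; one must combine the order-$3$ and order-$5$ counts simultaneously and eliminate cleverer configurations exploiting diagonal supplements (for which $|H \cap N\varepsilon| = 60$ and which span elements of mixed order). This is the ``quite different argument'' alluded to in the paragraph preceding the theorem.
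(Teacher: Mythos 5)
Your upper bound is fine and is in fact the very cover the paper describes after the theorem statement: the socle together with the $4\cdot 5=20$ normalizers $N_G(M\times M^a)$ with $M$ a point stabilizer ($A_4$-type) and the $6\cdot 6=36$ normalizers with $M$ a Sylow $5$-normalizer ($D_{10}$-type); your verification via $((a,b)\varepsilon)^2=(ab,ba)$ and the criterion $ab\in M$ for $(a,b)\varepsilon$ to lie in some $N_G(M\times M^{a'})$ is correct. The genuine gap is the lower bound, which is the actual content of the theorem (the paper itself does not prove it here, it cites \cite{gar2}), and your proposal concedes rather than closes it. Concretely, three steps are missing. First, the count ``$1440$ elements with $|ab|=5$, each $D_{10}$-type supplement containing exactly $40$ of them, hence at least $36$ such supplements'' is only valid once diagonal-type supplements are dealt with: $N_G(\Delta_\varphi)$ meets $N\varepsilon$ in $60$ elements, $24$ of which have $|ab|$ of order $5$, so a hypothetical cover could trade $D_{10}$-type subgroups for diagonals and the naive division $1440/40$ does not apply. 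This is exactly the comparison demanded by condition (4) of Definition \ref{du}: the intended argument is a definite-unbeatability computation (Lemma \ref{l6}) on the stratum $\Pi=\{(a,b)\varepsilon : |ab|=5\}$, where one checks $|\Pi\cap K|\le 40$ for \emph{every} proper $K$ not in the family, diagonals included, and the pairwise disjointness on $\Pi$; you state the conclusion but not this check.

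Second, for the $1200$ elements with $|ab|=3$ the bound ``at least $20$ further supplements'' does not follow from counting intersections: each $A_4$-type supplement contains $96$ of them, so counting alone gives only $\lceil 1200/96\rceil=13$. Each such element lies in exactly two $A_4$-type supplements $N_G(\mathrm{Stab}(i)\times \mathrm{Stab}(i)^{a^{-1}})$, indexed by the two fixed points $i$ of $ab$, so one needs a genuine combinatorial argument on the $5\times 5$ array of subgroups $N_G(\mathrm{Stab}(i)\times\mathrm{Stab}(j))$ (showing the uncovered cells must lie in a single row or column, whence at least $25-5=20$ are needed), and again one must simultaneously account for $S_3\times S_3$-type and diagonal supplements, which also contain such elements; nor can the two strata simply be added without checking that no subgroup contributes to both. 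Third, the passage from $\sigma_{N\varepsilon}(G)\ge 56$ to $\sigma(G)\ge 57$ (that $56$ supplements cannot in addition cover the socle, so a $57$th subgroup is forced) is asserted, not proved; a crude order count ($56\cdot 144>3600$) does not settle it. So the proposal reproduces the paper's cover and correctly locates the difficulty, but the lower bound --- the part for which the paper defers to \cite{gar2} and for which the definite-unbeatability machinery of Definition \ref{du} is introduced --- remains unproven. (Minor point: the ``quite different argument'' mentioned before the theorem refers to extending Proposition \ref{56} to the value $56$, not to the proof of this theorem.)
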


A minimal cover of $G = A_5 \wr C_2$ is given by its socle, $\soc(G) = A_5 \times A_5$, together with the subgroups of the form $N_G(M \times M^a)$ where $a \in A_5$ and $M$ is either the stabilizer of $j \in \{1,2,3,4,5\}-\{i\}$ (for some $i \in \{1,2,3,4,5\}$) in $A_5$ or the normalizer of a Sylow $5$-subgroup of $A_5$.

The lower bounds for the covering number will be obtained by using the following tool, introduced by Mar\'oti in \cite{maroti}.

\begin{defi}[Definite unbeatability] \label{du}
\label{d1} Let $X$ be a group. Let $\mathcal{H}$ be a set
of proper subgroups of $X$, and let $\Pi \subseteq X$. Suppose
that the following four conditions hold for $\mathcal{H}$ and
$\Pi$.
\begin{enumerate}
\item $\Pi \cap H \neq \emptyset$ for every $H \in \mathcal{H}$;

\item $\Pi \subseteq \bigcup_{H \in \mathcal{H}} H$;

\item $\Pi \cap H_{1} \cap H_{2} = \emptyset$ for every distinct
pair of subgroups $H_{1}$ and $H_{2}$ of $\mathcal{H}$;

\item $|\Pi \cap K| \leq |\Pi \cap H|$ for every $H \in
\mathcal{H}$ and $K < X$ with $K \not \in \mathcal{H}$.
\end{enumerate}
Then $\mathcal{H}$ is said to be definitely unbeatable on $\Pi$.
\end{defi}

For $\Pi \subseteq X$ let $\sigma_X(\Pi)$ be the least cardinality
of a family of proper subgroups of $X$ whose union contains $\Pi$.
The following lemma is straightforward.

\begin{lemma}
\label{l6} If $\mathcal{H}$ is definitely unbeatable on $\Pi$ then
$\sigma_X(\Pi)=|\mathcal{H}|$.
\end{lemma}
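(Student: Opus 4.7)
The plan is to prove the two inequalities $\sigma_X(\Pi)\le|\mathcal{H}|$ and $\sigma_X(\Pi)\ge|\mathcal{H}|$ separately. The upper bound is immediate from condition (2) of Definition \ref{du}, since $\mathcal{H}$ itself is a family of proper subgroups whose union contains $\Pi$; so everything lies in establishing the lower bound.

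For the lower bound I would take an arbitrary family $\mathcal{K}$ of proper subgroups of $X$ whose union contains $\Pi$ and show $|\mathcal{K}|\ge|\mathcal{H}|$. The natural move is to split $\mathcal{K}=\mathcal{K}_1\sqcup\mathcal{K}_2$ where $\mathcal{K}_1=\mathcal{K}\cap\mathcal{H}$ and $\mathcal{K}_2=\mathcal{K}\setminus\mathcal{H}$, and set $\mathcal{H}'=\mathcal{H}\setminus\mathcal{K}_1$. Then $|\mathcal{K}|\ge|\mathcal{H}|$ is equivalent to $|\mathcal{K}_2|\ge|\mathcal{H}'|$, so the whole task reduces to showing that the subgroups in $\mathcal{K}_2$, which lie outside $\mathcal{H}$, are numerous enough to cover what the missing members of $\mathcal{H}$ were covering.

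The key observation is that, by condition (3), for every $H\in\mathcal{H}'$ the set $\Pi\cap H$ is disjoint from every other member of $\mathcal{H}$, hence disjoint from every element of $\mathcal{K}_1$; thus $\Pi\cap H$ must in fact be covered entirely by $\bigcup_{K\in\mathcal{K}_2}K$. Now condition (4) says that $|\Pi\cap K|\le m$ for every $K\in\mathcal{K}_2$, where $m:=\min_{H\in\mathcal{H}}|\Pi\cap H|$, and condition (1) guarantees $m\ge 1$. A single double-counting step closes the argument:
$$|\mathcal{H}'|\cdot m \;\le\; \sum_{H\in\mathcal{H}'}|\Pi\cap H| \;\le\; \sum_{H\in\mathcal{H}'}\sum_{K\in\mathcal{K}_2}|\Pi\cap H\cap K| \;=\; \sum_{K\in\mathcal{K}_2}\sum_{H\in\mathcal{H}'}|\Pi\cap H\cap K| \;\le\; \sum_{K\in\mathcal{K}_2}|\Pi\cap K|\;\le\;|\mathcal{K}_2|\cdot m,$$
where the equality of double sums is just Fubini and the penultimate inequality uses the disjointness from condition (3) applied inside each $K$. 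Dividing by $m$ gives $|\mathcal{H}'|\le|\mathcal{K}_2|$, as required.

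There is essentially no hard part: the text itself calls the lemma straightforward, and the whole content is the bookkeeping above. The only thing worth being careful about is that condition (4) is quantified over \emph{every} $H\in\mathcal{H}$, so the bound $|\Pi\cap K|\le m$ for $K\not\in\mathcal{H}$ is legitimate, and that the disjointness in (3) is what lets the sums on the right-hand side collapse back into $\sum_{K\in\mathcal{K}_2}|\Pi\cap K|$ without overcounting.
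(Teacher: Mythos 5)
Your argument is correct: the split $\mathcal{K}=\mathcal{K}_1\sqcup\mathcal{K}_2$, the observation that condition (3) forces each $\Pi\cap H$ with $H\in\mathcal{H}\setminus\mathcal{K}_1$ to be covered by $\mathcal{K}_2$ alone, and the double count using (1) and (4) together give exactly the lower bound $|\mathcal{K}|\geq|\mathcal{H}|$, while (2) gives the upper bound. The paper omits the proof entirely (it only calls the lemma straightforward), and what you wrote is precisely the intended bookkeeping, so there is nothing to correct.
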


It follows that if $\mathcal{H}$ is definitely unbeatable on $\Pi$
then $|\mathcal{H}| = \sigma_X(\Pi) \leq \sigma(X)$.

Let us give \cite[Theorem 3.1]{maroti} as an example. Let $n \geq 11$ be an odd integer, and let $X := \Sym(n)$ be the symmetric group on $n$ letters. Let $\mathcal{H}$ be the family of subgroups of $\Sym(n)$ consisting of the alternating group $\Alt(n)$ and the intransitive maximal subgroups of $\Sym(n)$. Let $\Pi$ be the subset of $\Sym(n)$ consisting of the permutations which are product of at most two disjoint cycles. Then $\mathcal{H}$ is a cover of $\Sym(n)$ which is definitely unbeatable on $\Pi$, therefore $\sigma(\Sym(n)) = |\mathcal{H}| = 2^{n-1}$.

This example was rivisited and generalized by Mar\'oti and me (cf. \cite{margar}, \cite{gar2}) and the results summarized in Theorems \ref{t1} and \ref{t2} below were obtained.

Let us fix some notations we will often use.

\begin{nota} \label{mono}
Let $G$ be a monolithic group with socle $N = \soc(G) = S_1 \times \cdots \times S_m$, where $S_1, \ldots ,S_m$ are pairwise isomorphic non-abelian simple groups. $X := N_G(S_1)/C_G(S_1)$ is an almost-simple group with socle $S := S_1 C_G(S_1)/C_G(S_1) \cong S_1$. The minimal normal subgroups of $S^m = S_1 \times \ldots \times S_m$ are precisely its factors, $S_1,\ldots,S_m$. Since automorphisms send minimal normal subgroups to minimal normal subgroups, it follows that $G$ acts on the $m$ factors of $N$. Let $\rho: G \to \Sym(m)$ be the homomorphism induced by the conjugation action of $G$ on the set $\{S_1, \ldots,S_m\}$. $K := \rho(G)$ is a transitive permutation group of degree $m$. By \cite[Remark 1.1.40.13]{spagn} $G$ embeds in the wreath product $X \wr K$. Let $L$ be the subgroup of $X$ generated by the following set: $$S \cup \{x_1 \cdots x_m\ |\ \exists k \in K:\ (x_1, \ldots, x_m) k \in G\}.$$Let $T$ be a normal subgroup of $X$ containing $S$ and contained in $L$ with the property that $L/T$ has prime order if $L \neq S$, and $T=L$ if $L=S$.
\end{nota}

Let $G$ be a primitive monolithic group with non-abelian socle $N$, and write $N=S^m$ with $S$ a non-abelian simple group. The covers of $G$ we often look at consist of some subgroups of $G$ containing $N$ and subgroups of the form $N_G(M \times M^{a_2} \times \cdots \times M^{a_m})$ with $M < S$, which will be called ``\textit{product type subgroups}''.

In the following if $n$ is a positive integer we denote by $\omega(n)$ the number of prime divisors of $n$. Suppose that $G/N$ is cyclic. The covers of $G$ we consider consist of all the $\omega(|G/N|)$ maximal subgroups of $G$ containing $N$ and some product type subgroups $N_G((S \cap M) \times (S \cap M)^{a_2} \times \cdots \times (S \cap M)^{a_m})$ where $a_1=1,a_2,\ldots,a_m \in S$ and $M$ varies in a family of maximal subgroups of $X$ supplementing $S$ which covers a coset $xS$ of $S$ in $X$ which generates the cyclic group $X/S$. This is how we obtain upper bounds for $\sigma(G)$ (the size of a cover of $G$ is an upper bound for $\sigma(G)$).

\begin{teor}[Mar\'oti A., Garonzi M. 2010 \cite{margar}] \label{t1}

Let $G$ be a monolithic group with non-abelian socle, and let us use Notations \ref{mono}. Suppose that $G/N$ is cyclic and that $X = S = \Alt(n)$. Then the following holds.

\begin{enumerate}

\item If $12 < n \equiv 2 \mod(4)$ then $$\sigma(G) = \omega(m) + \sum_{i=1,\ i\ \text{odd}}^{(n/2)-2} \binom{n}{i}^m + \frac{1}{2^m} \binom{n}{n/2}^m.$$

\item If $12 < n \not \equiv 2 \mod(4)$ then $$\omega(m) + \frac{1}{2} \sum_{i=1,\ i\ \text{odd}}^n \binom{n}{i}^m \leq \sigma(G).$$

\item Suppose $n$ has a prime divisor at most $\sqrt[3]{n}$. Then $$\sigma(G) \sim \omega(m) + \min_{\mathcal{M}} \sum_{M \in \mathcal{M}} |S:M|^{m-1}\ \text{as}\ n \to \infty.$$

\end{enumerate}

\end{teor}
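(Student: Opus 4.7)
The plan is to produce matching upper and lower bounds for $\sigma(G)$. The upper bound is obtained by constructing an explicit cover of $G$ following the template described just before the theorem, and the lower bound is established by the definite unbeatability method of Definition \ref{du} applied to a carefully chosen subset $\Pi \subseteq N$.

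For the upper bound in case (1), I would exhibit the cover $\mathcal{C}$ consisting of the $\omega(m)$ maximal subgroups of $G$ containing $N$, together with, for each odd $i$ with $1 \le i \le n/2 - 2$, every product type subgroup $N_G((\Stab(A_1) \cap \Alt(n)) \times \cdots \times (\Stab(A_m) \cap \Alt(n))^{a_m})$ with $A_j \subseteq \{1,\ldots,n\}$ of size $i$, yielding $\binom{n}{i}^m$ subgroups for each such $i$; and, at the middle level $i = n/2$, the analogous subgroups counted up to complementation $A_j \leftrightarrow \{1,\ldots,n\} \setminus A_j$ (which preserves the stabilizer), contributing the extra $\frac{1}{2^m}\binom{n}{n/2}^m$. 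To verify that $\mathcal{C}$ covers $G$, the elements outside $N$ are handled by Lemma \ref{spancoprime} together with the cyclicity of $G/N$, while an element $x = (x_1, \ldots, x_m) \in \Alt(n)^m$ is placed inside a product type subgroup as soon as each $x_j$ stabilizes a subset of odd cardinality at most $n/2$; a cycle-type analysis of elements of $\Alt(n)$, exploiting $n \equiv 2 \pmod 4$, shows this always occurs.

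The lower bound for (1) and (2) is the core of the argument. Define $\Pi \subseteq N$ as the union over the relevant odd integers $i$ of the sets $\Pi_i$ of tuples $(y_1, \ldots, y_m) \in \Alt(n)^m$ whose $j$-th coordinate has cycle type $(i, n-i)$. The product type subgroup $N_G(\prod_j (\Stab(A_j) \cap \Alt(n))^{a_j})$ meets $\Pi_i$ exactly in the tuples whose $j$-th cycle of length $i$ lies on $A_j^{a_j}$, and these intersections are pairwise disjoint. This gives conditions (D1), (D2), (D3) of Definition \ref{du} essentially for free; the content lies in (D4). To establish it, one must show that every proper subgroup $K < G$ outside the cover satisfies $|\Pi \cap K| \le |\Pi \cap H|$ for some $H \in \mathcal{C}$. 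Reducing to $K \cap N$ and splitting along the three kinds of maximal subgroups of $N = \Alt(n)^m$ — intransitive (already accounted for), transitive imprimitive, and primitive — one bounds the number of $(i, n-i)$-elements in an imprimitive or primitive maximal subgroup of $\Alt(n)$ using the classical index estimates and shows that the $m$-fold product never catches up with an intransitive stabilizer of the correct shape. A parallel argument, including the factor $\tfrac{1}{2}$ from the symmetry $A \leftrightarrow A^c$ at the levels that straddle $n/2$, gives the weaker lower bound in (2).

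The asymptotic statement (3) rests on the fact that if $n$ has a prime divisor $p \le n^{1/3}$, then every primitive maximal subgroup of $\Alt(n)$ has index exponentially larger than the intransitive index $n$, by standard bounds of the kind used in \cite{maroti}. Consequently, in $\min_{\mathcal{M}} \sum_{M \in \mathcal{M}} |S:M|^{m-1}$ every primitive $M$ is negligible, and the minimum is asymptotically realized by the cover of $S$ by point stabilizers, matching the leading term of the construction; the exponent $m-1$ emerges from counting the $G$-conjugates of the product of the chosen subset stabilizers. The main obstacle of the entire proof is the verification of (D4) in the lower bound: atypical maximal subgroups of $G$ — most notably those of diagonal type or product type built over an imprimitive block system on $\{1,\ldots,m\}$ — demand case-by-case control, and it is here that the hypothesis $n > 12$ is used, to provide enough slack in the combinatorial estimates to defeat all competing subgroup types.
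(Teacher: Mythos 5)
Your overall template (explicit cover for the upper bound, definite unbeatability for the lower bound) is the right one, but the lower bound as you set it up cannot work for $m\geq 2$, which is exactly the new content of the theorem. You place $\Pi$ inside the socle $N=\Alt(n)^m$. For $m\geq 2$ the socle is itself a proper subgroup of $G$ (and is contained in each of the $\omega(m)$ maximal subgroups above it), so any $\Pi\subseteq N$ satisfies $\sigma_G(\Pi)=1$; by Lemma \ref{l6} no family of size larger than one can be definitely unbeatable on such a $\Pi$. Concretely, condition (4) of Definition \ref{du} fails against $K=N$ (or against a maximal subgroup containing $N$), since $|\Pi\cap K|=|\Pi|$ dwarfs $|\Pi\cap H|$ for every product type $H$, and if you put the subgroups containing $N$ into $\mathcal{H}$ then condition (3) fails because they all contain $\Pi$. (For $m=1$ your $\Pi$ is exactly Mar\'oti's and the statement is his theorem; the case $m\geq2$ is where the work lies.) The proof in \cite{margar}, in line with Lemma \ref{nonciclo} and Theorem \ref{sopra} of this paper, takes $\Pi$ \emph{outside} the socle: elements $g=(x_1,\ldots,x_m)\delta$ with $\rho(g)=\delta$ a generator of the cyclic group $K$, classified by the cycle type $(i,n-i)$ of the twisted product $x_1x_{\delta(1)}\cdots x_{\delta^{m-1}(1)}$. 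Such a $g$ lies in $N_G(\Stab(A_1)\times\cdots\times\Stab(A_m))$ precisely when the twisted product stabilizes $A_1$ and $A_{\delta(j)}=A_jx_j$ for all $j$, so each element of $\Pi$ determines a unique $m$-tuple of $i$-sets; this is what produces the counts $\binom{n}{i}^m$ in (1) and the exponent $m-1$ per member of $\mathcal{M}$ in (3). The term $\omega(m)$ needs a separate argument, using elements contained in no supplement of $N$ (e.g.\ elements whose twisted products over two distinct cycles of a non-generating $\rho(g)$ generate $S$ in the sense of Lemma \ref{nonciclo}), forcing $\omega(m)$ subgroups containing $N$ into any cover.

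The upper bound is also misdirected. Elements of $N$, and more generally all elements whose image does not generate $G/N$, are covered by the $\omega(m)$ maximal subgroups containing $N$; what the product type subgroups must cover are the generating cosets, and the membership criterion there is a condition on the twisted product along the $m$-cycle, not on the individual coordinates (this is the computation (\ref{prodeq}) in the proof of Theorem \ref{sopra}). Moreover your coordinatewise claim is false: an element of $\Alt(n)$ of cycle type $(2,n-2)$ (even, since $n\equiv 2\bmod 4$) stabilizes only sets of sizes $0,2,n-2,n$, hence no odd set at all. The correct dichotomy, which is where the term $\frac{1}{2^m}\binom{n}{n/2}^m$ comes from, is that every element of $\Alt(n)$ with $n\equiv 2\bmod 4$ either stabilizes a set of odd size at most $n/2-2$ or preserves (possibly interchanging) a partition into two blocks of size $n/2$, so the cover must include the products of imprimitive half-partition stabilizers, of which there are $\frac{1}{2^m}\binom{n}{n/2}^m$. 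Lemma \ref{spancoprime} is not the tool that handles the non-socle elements in this construction.
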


\begin{teor}[Garonzi M. 2011 \cite{gar2}] \label{t2}

Let $G$ be a monolithic group with non-abelian socle, and let us use Notations \ref{mono}. Suppose that $G/N$ is cyclic and that $X = \Sym(n)$. Then the following holds.

\begin{enumerate}

\item Suppose that $n \geq 7$ is odd and $(n,m) \neq (9,1)$. Then $$\sigma(G) = \omega(2m) + \sum_{i=1}^{(n-1)/2} \binom{n}{i}^m.$$

\item Suppose that $n \geq 8$ is even. Then $$\left( \frac{1}{2} \binom{n}{n/2} \right)^m \leq \sigma(G) \leq \omega(2m) + \left( \frac{1}{2} \binom{n}{n/2} \right)^m + \sum_{i=1}^{[n/3]} \binom{n}{i}^m.$$In particular $\sigma(G) \sim \left( \frac{1}{2} \binom{n}{n/2} \right)^m$ as $n \to \infty$.

\end{enumerate}

\end{teor}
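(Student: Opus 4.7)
The plan is to prove both parts by the two-sided method that underpins Theorem \ref{t1}: construct an explicit cover of $G$ matching the stated expression (upper bound) and exhibit a subset $\Pi \subseteq G$ on which the natural family of members of that cover is definitely unbeatable in the sense of Definition \ref{du}, so that Lemma \ref{l6} delivers the matching lower bound. Throughout I would work inside the embedding $G \leq X \wr K = \Sym(n) \wr K$ of Notations \ref{mono}. The assumption that $G/N$ is cyclic forces $K$ to be cyclic and, via the embedding $G/N \hookrightarrow (X/S) \wr K$ together with the cyclicity of its image, fixes the number of maximal subgroups of $G$ containing $N$ as $\omega(2m)$.

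For the upper bound in (1), I would take the cover consisting of: (a) the $\omega(2m)$ maximal subgroups of $G$ containing $N$, one per prime divisor of $|G/N|$, which together cover exactly the elements of $G$ whose image in $G/N$ fails to generate $G/N$; and (b), for each $i \in \{1,\ldots,(n-1)/2\}$ and each ordered $m$-tuple $(U_1,\ldots,U_m)$ of $i$-subsets of $\{1,\ldots,n\}$, the product-type subgroup $N_G(\Stab_{S_1}(U_1)\times\cdots\times\Stab_{S_m}(U_m))$. The crucial input, used by Mar\'oti in \cite{maroti}, is that for $n$ odd every odd permutation in $\Sym(n)$ has a nontrivial orbit of size at most $(n-1)/2$ (all-orbits-of-size $\geq (n+1)/2$ would force an $n$-cycle, which is even); applying this coordinate-wise shows that family (b) covers one coset of $N$ whose image generates $G/N$. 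Lemma \ref{spancoprime} then propagates this coverage to every generating coset, completing the cover. Counting gives $\omega(2m) + \sum_{i=1}^{(n-1)/2} \binom{n}{i}^m$.

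For the matching lower bound, I would let $\Pi$ be the union of two pieces: first, for each generator of $G/N$, a supply of elements $(s_1,\ldots,s_m)k \in G$ whose coordinates $s_j \in \Alt(n)$ all have cycle type $(i_j,n-i_j)$ with $1 \leq i_j \leq (n-1)/2 < n-i_j$, so that each $s_j$ lies in a unique intransitive maximal subgroup of $\Sym(n)$ of the chosen type; and second, for each prime $p$ dividing $|G/N|$, a witness whose image in $G/N$ has order $|G/N|/p$ and so lies in exactly one of the $\omega(2m)$ maximals above $N$. Conditions (1)--(3) of Definition \ref{du} follow from this cycle-type uniqueness and the structure of cyclic groups. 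The central step is condition (4): the product-type subgroup $N_G(\prod_j\Stab(U_j))$ must contain at least as many elements of $\Pi$ of the prescribed cycle pattern as any other proper subgroup of $G$, and similarly for the witness elements. This reduces to a local extremal statement in $\Sym(n)$ (already present in \cite{maroti} and \cite{margar}) combined with a classification argument ruling out that exotic subgroups of $\Sym(n) \wr K$ (diagonal, twisted, almost simple, or blow-ups) beat product type on these specific cycle structures.

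The hardest part is verifying condition (4) in the presence of the cyclic top group $G/N$, because one must exclude, via the structure of subgroups of $\Sym(n) \wr K$, a large family of potential competitors. The even case in (2) follows the same plan, the dominant family of the cover now being the imprimitive blow-ups of $\Sym(n/2) \wr C_2$ (of index $\frac{1}{2}\binom{n}{n/2}$ in $\Sym(n)$), contributing the $\bigl(\frac{1}{2}\binom{n}{n/2}\bigr)^m$ term, supplemented by intransitive contributions for $1 \leq i \leq \lfloor n/3 \rfloor$ and by the $\omega(2m)$ maximal subgroups above $N$. The lower bound fails to match exactly in (2) because, unlike the odd case, no single cycle-type pattern on elements of $\Sym(n)$ sits in exactly one imprimitive and no intransitive subgroup, so definite unbeatability only pins down the dominant term; that accounts for equality being only asymptotic.
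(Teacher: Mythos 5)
Your overall architecture (upper bound from the $\omega(2m)$ maximal subgroups above $N$ plus product-type subgroups coming from a family of maximal subgroups of $\Sym(n)$ covering the odd coset, propagated by Lemma \ref{spancoprime}; lower bound via Definition \ref{du} and Lemma \ref{l6}) is indeed the method of \cite{gar2} and \cite{margar}, which this paper only cites, and your upper-bound half is essentially correct. The problem is the lower bound, which is where all the difficulty lies, and there your set $\Pi$ is built on the wrong invariant. An element $h=(s_1,\ldots,s_m)k\in G$ lying in a coset that generates $G/N$ necessarily has $\rho(h)=k$ an $m$-cycle, and for such $h$ membership in a product-type subgroup $N_G((M\cap S)\times (M\cap S)^{a_2}\times\cdots\times(M\cap S)^{a_m})$ is governed by whether the product $s_1s_{k(1)}\cdots s_{k^{m-1}(1)}$ lies in a suitable conjugate of $M$ (this is exactly condition (\ref{prodeq}) in the proof of Theorem \ref{sopra}), \emph{not} by the cycle types of the individual coordinates $s_j$. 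So your key claim that ``each $s_j$ lies in a unique intransitive maximal subgroup'' does not yield conditions (2)--(4) of Definition \ref{du}; the correct $\Pi$ must prescribe the cycle type of the product along the cycle of $k$ (as in \cite{maroti} for $m=1$ and in \cite{margar}, \cite{gar2} in general). There is also a parity inconsistency: for $n$ odd a permutation with exactly two orbits, of lengths $i$ and $n-i$, is an odd permutation, so it cannot lie in $\Alt(n)$; and if instead all coordinates of $h$ were even, then $h^m\in N$, the image of $h$ in $G/N\cong C_{2m}$ has order dividing $m$, so $h$ is not in a generating coset and lies in the maximal subgroup above $N$ corresponding to the prime $2$ --- a member of your cover --- so condition (3) of Definition \ref{du} fails for your $\Pi$ as described.

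A second genuine gap is your treatment of the $\omega(2m)$ subgroups above $N$ by ``one witness per prime.'' If some $H\in\mathcal{H}$ satisfies $|\Pi\cap H|=1$, then condition (4) forces \emph{every} proper subgroup $K\notin\mathcal{H}$ to meet $\Pi$ in at most one element, which is hopeless: $N$ itself, or the preimage under $\rho$ of a proper subgroup of $K$, already meets the product-type part of $\Pi$ in many elements. In the actual proofs the subgroups containing the socle are paid for by large slices of $\Pi$ (for instance elements whose cycle-product is an $n$-cycle), or are handled by a separate counting argument outside the definite-unbeatability framework; this is also where the exclusion $(n,m)\neq(9,1)$ and the failure of exactness in the even case really come from. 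As written, your lower-bound argument would not survive the verification of conditions (2)--(4), so the proposal has the right skeleton but a genuine gap at its core.
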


\section{Attacking the conjecture}

The content of this section is included in my Ph.D. thesis.

The following result provides a first partial reduction to monolithic groups.

\begin{prop} \label{starmin}
Let $H$ be a non-abelian $\sigma$-elementary group, let $N_1,\ldots,N_{\ell}$ be minimal normal subgroups of $H$ such that $\soc(H) = N_1 \times \cdots \times N_{\ell}$ and let $X_1,\ldots,X_{\ell}$ be the primitive monolithic groups associated to $N_1,\ldots,N_{\ell}$ respectively. Then at most one of $N_1,\ldots,N_{\ell}$ is abelian. Suppose that $N_1$ is non-abelian and that $\sigma^{\ast}(X_1) \leq \sigma^{\ast}(X_j)$ whenever $j \in \{1,\ldots,\ell\}$ and $N_j$ is non-abelian. If $\sigma(X_1) < 2 \sigma^{\ast}(X_1)$ then $H \cong X_1$, i.e. $H$ is monolithic.
\end{prop}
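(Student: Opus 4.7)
The plan is as follows. The first assertion---that at most one of $N_1,\ldots,N_\ell$ can be abelian---is already quoted in the text from \cite{cubo}, so I take it for granted. For the main claim I assume $\ell \geq 2$ and aim at a contradiction from two inequalities that hold for every $i$: Proposition \ref{sigmastar} gives $\sum_{j=1}^{\ell} \sigma^{\ast}(X_j) \leq \sigma(H)$, and since $H$ is subdirect in $X_1 \times \cdots \times X_\ell$ each $X_i$ is a quotient of $H$, so $\sigma(H) \leq \sigma(X_i)$ by the opening remark of the paper.

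In \textbf{Case A}, all $N_j$ are non-abelian. The hypothesis gives $\sigma^{\ast}(X_1) \leq \sigma^{\ast}(X_j)$ for every $j$, so
$$\ell \cdot \sigma^{\ast}(X_1) \leq \sum_{j=1}^{\ell} \sigma^{\ast}(X_j) \leq \sigma(H) \leq \sigma(X_1) < 2 \sigma^{\ast}(X_1),$$
forcing $\ell < 2$.

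In \textbf{Case B}, some $N_j$ is abelian, and by the first assertion exactly one is; relabel so that $N_\ell$ is abelian. If $\ell \geq 3$, then applying the hypothesis on the non-abelian indices $1,\ldots,\ell-1$ gives $(\ell-1)\sigma^{\ast}(X_1) + \sigma^{\ast}(X_\ell) \leq \sigma(H) < 2\sigma^{\ast}(X_1)$, which forces $\sigma^{\ast}(X_\ell) < (3-\ell)\sigma^{\ast}(X_1) \leq 0$, impossible. The essential subcase is $\ell = 2$. Since $X_2$ is primitive monolithic with abelian socle $N_2$, the socle is self-centralizing and admits a complement, and a standard argument via the trivial core of a complement shows $Z(X_2) \cap N_2 = \{1\}$; Proposition \ref{abmns} then yields $\sigma(X_2) \leq 2|N_2|-1 < 2|N_2| \leq 2\sigma^{\ast}(X_2)$, the last inequality because any supplement of $N_2$ in $X_2$ has index at least $|N_2|$, so $\sigma^{\ast}(X_2) \geq \ell_{X_2}(N_2) = |N_2|$. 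Combining the key chain in two different ways,
$$\sigma^{\ast}(X_1) + \sigma^{\ast}(X_2) \leq \sigma(H) \leq \sigma(X_2) < 2 \sigma^{\ast}(X_2)$$
forces $\sigma^{\ast}(X_1) < \sigma^{\ast}(X_2)$, whereas
$$\sigma^{\ast}(X_1) + \sigma^{\ast}(X_2) \leq \sigma(H) \leq \sigma(X_1) < 2 \sigma^{\ast}(X_1)$$
forces $\sigma^{\ast}(X_2) < \sigma^{\ast}(X_1)$; these contradict each other.

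Hence $\ell = 1$, and since $H$ both embeds in and surjects onto $X_1$ it equals $X_1$, so $H$ is monolithic. The main obstacle is the subcase $\ell = 2$ with $N_2$ abelian: the hypothesis compares $\sigma^{\ast}(X_1)$ only with the $\sigma^{\ast}(X_j)$ attached to non-abelian socle factors, so nothing a priori forces $\sigma^{\ast}(X_2) \geq \sigma^{\ast}(X_1)$, and it is exactly Proposition \ref{abmns} that supplies the extra cap $\sigma(X_2) < 2\sigma^{\ast}(X_2)$ needed to play $X_1$ and $X_2$ symmetrically against each other.
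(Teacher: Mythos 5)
Your proof is correct and follows essentially the same route as the paper: Proposition \ref{sigmastar} together with $\sigma(H)\leq\sigma(X_i)$ forces all other socle factors to be abelian (hence, by the cited result from \cite{cubo}, $\ell\leq 2$), and the case $\ell=2$ is killed by Proposition \ref{abmns} combined with $\sigma^{\ast}(X_2)\geq\ell_{X_2}(N_2)=|N_2|$, exactly as in the paper's argument. The only differences are organizational (your explicit case split and the two chains in place of the paper's single inequality involving $\min\{2\sigma^{\ast}(X_1),2|N_2|\}$), so there is nothing essentially new or missing.
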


\begin{proof}
By Proposition \ref{sigmastar} $$\sigma^{\ast}(X_1) + \sum_{j=2}^{\ell} \sigma^{\ast}(X_j) \leq \sigma(H) \leq \sigma(X_1) < 2 \sigma^{\ast}(X_1).$$It follows that $\sum_{j=2}^{\ell} \sigma^{\ast}(X_j) < \sigma^{\ast}(X_1)$ hence, by the minimality hypothesis on $X_1$, $N_2,\ldots,N_{\ell}$ are abelian. In \cite[Corollary 14]{cubo} it is proved that any non-abelian $\sigma$-elementary group has at most one abelian minimal normal subgroup, thus $\ell = 2$. Since $N_2$ is abelian $\ell_{X_2}(N_2)=|N_2|$, and by Proposition \ref{sigmastar}
\begin{eqnarray}
\min \{2 \sigma^{\ast}(X_1), 2|N_2|\} & \leq & \sigma^{\ast}(X_1) + |N_2|  =  \sigma^{\ast}(X_1) + \ell_{X_2}(N_2) \leq \nonumber \\ & \leq & \sigma(H)  \leq  \min \{\sigma(X_1),\sigma(X_2)\}. \nonumber
\end{eqnarray}
Now by hypothesis $\sigma(X_1) < 2 \sigma^{\ast}(X_1)$, and $\sigma(X_2) < 2|N_2|$ by Proposition \ref{abmns}. This leads to a contradiction.
\end{proof}

In order to prove an inequality like $\sigma(G) < 2 \sigma^{\ast}(G)$ for $G$ a primitive monolithic group we first need some way to get as much general as possible upper bounds for $\sigma(G)$.

\begin{teor} \label{sopra}
Let $G$ be a monolithic group with non-abelian socle, and let us use Notations \ref{mono}. Assume that $X/S$ is abelian. Let $\mathcal{M}$ be a set of maximal subgroups of $X$ supplementing $S$ and such that $\bigcup_{M \in \mathcal{M}} M$ contains a coset $xS \in L$ with the property that $\langle x,T \rangle = L$.

Then $\sigma(G) \leq 2^{m-1} + \sum_{M \in \mathcal{M}} |S:S \cap M|^{m-1}$.
\end{teor}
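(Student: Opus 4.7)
The plan is to produce an explicit cover of $G$ of the stated size. Split it as $\mathcal{C} = \mathcal{C}_1 \cup \mathcal{C}_2$, where $\mathcal{C}_2$ is a family of product type subgroups of size $\sum_{M \in \mathcal{M}} |S : S \cap M|^{m-1}$, and $\mathcal{C}_1$ is a family of at most $2^{m-1}$ proper subgroups of $G$ containing $N$. To build $\mathcal{C}_2$ I fix, for each $M \in \mathcal{M}$, a left transversal $R_M \subseteq S$ of $S \cap M$ in $S$ containing $1$, and for each tuple $(a_2, \ldots, a_m) \in R_M^{m-1}$ (setting $a_1 := 1$) put $N_G(M \times M^{a_2} \times \cdots \times M^{a_m})$ into $\mathcal{C}_2$. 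These are proper subgroups of $G$ because the product $M^{a_1} \times \cdots \times M^{a_m}$ does not contain the socle $N$ (as $M$ supplements $S$), giving $|\mathcal{C}_2| = \sum_{M \in \mathcal{M}} |S : S \cap M|^{m-1}$ as desired.

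The crucial tool is the \emph{diagonal product} homomorphism $\psi : G \to L/T$ defined by $(x_1, \ldots, x_m)k \mapsto x_1 x_2 \cdots x_m T$ via the embedding $G \leq X \wr K$ of Notations \ref{mono}. Abelianness of $X/T$ (which follows from the hypothesis that $X/S$ is abelian, together with $S \leq T$) is exactly what makes $\psi$ a well-defined homomorphism: in the wreath product $(x_1, \ldots, x_m)k \cdot (y_1, \ldots, y_m)k' = (x_1 y_{k^{-1}(1)}, \ldots, x_m y_{k^{-1}(m)})\, kk'$, the permutation by $k$ of the $y_i$'s becomes invisible after reducing modulo $T$. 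By construction $\psi$ surjects onto $L/T$, cyclic of prime order $p$. I would then show that $\mathcal{C}_2$ covers every $g \in G$ with $\psi(g) \neq 1$. Indeed, since $xS \subseteq \bigcup_{M \in \mathcal{M}} M$, Lemma \ref{spancoprime} gives $x^\alpha S \subseteq \bigcup_{M \in \mathcal{M}} M$ for every $\alpha$ coprime to $|x|$, and as $p \mid |x|$ these $\alpha$ hit every nonzero residue modulo $p$; hence every nontrivial coset of $T$ in $L$ is contained in $\bigcup_{M \in \mathcal{M}} M \cdot T$. If $\psi(g) \neq 1$ then the diagonal product $x_1 \cdots x_m$ lies in some conjugate $M^s$ with $s \in S$, and distributing this conjugation by $s$ among the coordinates (using the $a_i \in R_M$ as degrees of freedom and using that $N$-translates of $g$ lie in the same coset $gN$) places $g$ in one of the normalizers in $\mathcal{C}_2$.

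Finally $\mathcal{C}_1$ must cover the remaining set $\ker(\psi)$. A single subgroup, namely $\ker(\psi)$ itself, suffices whenever $L \neq T$; the theorem's looser bound $2^{m-1}$ is stated so as to also absorb the degenerate case $L = T = S$ (where $\psi$ is trivial and no coset condition on $x$ survives) by using subgroups built from the subdirect structure of $G$ inside $X^m \rtimes K$, one per subset of $\{2, \ldots, m\}$ indicating which coordinates are forced into the normal subgroup $T$. The main obstacle in my view is the transfer step inside the covering argument for $\mathcal{C}_2$: translating the global condition ``$x_1 \cdots x_m \in M^s$ for some $s \in S$'' into the per-coordinate condition ``$x_i$ lies in $M^{a_i}$ for a tuple $(a_2,\ldots,a_m) \in R_M^{m-1}$'' that is needed to actually land in the normalizer $N_G(M \times M^{a_2} \times \cdots \times M^{a_m})$, while simultaneously respecting the $K$-action on coordinates. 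Once this is carried out cleanly, the proof amounts to assembling the two families and counting.
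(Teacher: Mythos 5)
Your overall architecture misreads where the difficulty lies, and the central covering claim is false. Membership of $g=(x_1,\ldots,x_m)k$ in a product type subgroup $N_G\bigl((M\cap S)\times (M\cap S)^{a_2}\times\cdots\times(M\cap S)^{a_m}\bigr)$ imposes a condition on the partial product of the $x_i$ along \emph{each cycle of $k$ separately} (each such partial product must lie in a suitable conjugate of $M$); only when $\rho(g)$ is an $m$-cycle does this collapse to a single condition on the full product, which can then be arranged with $a_2,\ldots,a_m\in S$ because $X=MS$ --- this is exactly the computation (\ref{prodeq}) in the paper, i.e.\ the step you yourself flag as the ``main obstacle'' and do not carry out. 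Consequently your claim that $\mathcal{C}_2$ covers every $g$ with $\psi(g)\neq 1$ fails: an element with $\psi(g)\neq 1$ whose image in $K$ is not an $m$-cycle (for instance $g\in G\cap X^m$ with trivial permutation part and one coordinate avoiding all $S$-conjugates of the subgroups $M\cap S$, $M\in\mathcal{M}$) lies in no member of $\mathcal{C}_2$, and not in $\ker\psi$ either. This is precisely the role of the $2^{m-1}-1$ subgroups $\rho^{-1}(\Stab_K(A))$, $A\subseteq\{1,\ldots,m\}$ of size at most $[m/2]$, in the paper's proof: they absorb \emph{all} elements whose image in $K$ is intransitive, and together with $R=\ker\psi$ they account for the whole $2^{m-1}$ term. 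That term is not a cushion for the degenerate case $L=T$; as written, your plan would prove the stronger bound $1+\sum_{M}|S:S\cap M|^{m-1}$ whenever $L\neq T$, which the construction cannot deliver.

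There is also a non sequitur in your treatment of the cosets of $T$: from $x^{\alpha}S\subseteq\bigcup_{M\in\mathcal{M}}M$ (Lemma \ref{spancoprime} applied in $X$) you conclude that every nontrivial coset of $T$ in $L$ lies in $\bigcup_M M\cdot T$, and then that the diagonal product of $g$ lies in some conjugate $M^{s}$ --- but containment \emph{modulo $T$} gives no containment of the actual product in any $M$ or conjugate of $M$. The way to pass from the coset $xS$ to other cosets is to apply the power trick to the group element itself: replace $g$ by $g^{\alpha}$ with $\alpha$ coprime to $|g|$ (and to $m$, so the permutation part stays an $m$-cycle), use that the covering subgroups are closed under taking powers, and work inside $G$ rather than with the product inside $X$ --- this is the mechanism of the proof of Lemma \ref{spancoprime}, not a statement about cosets of $T$. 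Two smaller points: the normalized subgroups must be built from $M\cap S$ (and one needs $M\cap S\neq 1$ for properness), not from $M$ itself; and properness of $N_G(D)$ for $1\neq D<N$ follows from uniqueness of the minimal normal subgroup $N$, not merely from $D$ not containing $N$.
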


Unfortunately the hypothesis ``$X/S$ abelian'' does not seem easy to bypass.

\begin{proof}
If $L \neq T$ define $$R := \{(x_1, \ldots ,x_m)k \in G\ |\ x_1 \cdots x_m \in T\}.$$Since $X/S$ is abelian, $R$ is a proper subgroup of $G$.

Let $\delta \in K$ be an $m$-cycle, $1 = a_1, a_2, \ldots ,a_m \in X$ and $M \in \mathcal{M}$. An element $(x_1, \ldots ,x_m) \delta \in X \wr K$ normalizes $(M \cap S) \times (M \cap S)^{a_2} \times \cdots \times (M \cap S)^{a_m}$ if and only if $$(M \cap S)^{a_{\delta^{-1}(1)} x_{\delta^{-1}(1)}} \times (M \cap S)^{a_{\delta^{-1}(2)} x_{\delta^{-1}(2)}} \times \cdots \times (M \cap S)^{a_{\delta^{-1}(m)} x_{\delta^{-1}(m)}} =$$ $$= (M \cap S) \times (M \cap S)^{a_2} \times \cdots \times (M \cap S)^{a_m},$$if and only if
\begin{equation} \label{prodeq}
a_{\delta^{-1}(1)} x_{\delta^{-1}(1)} a_1^{-1},\ a_{\delta^{-1}(2)} x_{\delta^{-1}(2)} a_2^{-1}, \ldots, a_{\delta^{-1}(m)} x_{\delta^{-1}(m)} a_m^{-1} \in N_X(M \cap S) = M.
\end{equation}
If $x_1 x_{\delta(1)} \cdots x_{\delta^{m-1}(1)} \in M$ then there exist $a_2, \ldots ,a_m \in X$ such that (\ref{prodeq}) is true. Since $M$ supplements $S$ in $X$, $a_2, \ldots ,a_m$ can be chosen in $S$. Therefore every element $(x_1, \ldots, x_m) \delta \in G$ such that $\delta$ is an $m$-cycle and $x_1 x_{\delta(1)} \cdots x_{\delta^{m-1}(1)} \in xS$ belongs to a subgroup of $G$ of the form $N_G((M \cap S) \times (M \cap S)^{a_2} \times \cdots \times (M \cap S)^{a_m})$ where $M \in \mathcal{M}$ and $a_2, \ldots, a_m \in S$. It follows that $G$ is covered by these subgroups together with $R$ (if $L \neq T$) and the pre-images through $\rho$ of $2^{m-1}-1$ maximal intransitive subgroups of $K$ (corresponding to the subsets of $\{1,\ldots,m\}$ of size from $1$ to $[m/2]$).
\end{proof}

Recall the structure of maximal subgroups of primitive monolithic groups.

\begin{defi}[\cite{spagn}, Definition 1.1.37]
Let $G = \prod_{i=1}^n S_i$ be a direct product of groups. A subgroup $H$ of $G$ is said to be ``full diagonal'' if each projection $\pi_i: H \to S_i$ is an isomorphism.
\end{defi}

What follows is part of the O'Nan-Scott theorem (reference: \cite[Remark 1.1.40]{spagn}). Let $G$ be a primitive monolithic group with non-abelian socle $N=S^m$. Let $H$ be a maximal subgroup of $G$ such that $N \not \subseteq H$, i.e. $HN=G$, i.e. $H$ supplements $N$. Suppose $N \cap H \neq \{1\}$, i.e. $H$ does not complement $N$. Since $N$ is the unique minimal normal subgroup of $G$ and $H$ is a maximal subgroup of $G$ not containing $N$, $H=N_G(N \cap H)$. In the following let $X := N_G(S_1)/C_G(S_1)$ (it is an almost simple group with socle $S_1C_G(S_1)/C_G(S_1) \cong S$). There are two possibilities for the intersection $N \cap H$:

\begin{enumerate}

\item \textbf{Product type}. Suppose the projections $H \to S_i$ are not surjective. Then there exists a subgroup $M$ of $S$ such that $N_X(M)$ supplements $S$ in $X$ and elements $a_2,\ldots,a_m \in S$ such that $$H \cap N = M \times M^{a_2} \times \ldots \times M^{a_m}.$$In this case $|H \cap N| = |M|^m$.

\item \textbf{Diagonal type}. Suppose the projections $H \to S_i$ are surjective. Then there exists an $H$-invariant partition $\Delta$ of $\{1,\ldots,m\}$ into blocks for the action of $H$ on $\{1,\ldots,m\}$ such that $$H \cap N = \prod_{D \in \Delta} (H \cap N)^{\pi_D}$$and for each $D \in \Delta$ the projection $(H \cap N)^{\pi_D}$ is a full diagonal subgroup of $\prod_{i \in D}S_i$. In this case $|H \cap N| \leq |S|^{m/r}$ where $r$ is the smallest prime divisor of $m$.

\end{enumerate}

We now prove a crucial lemma which we will need in the proof of the main theorem. Let $G$ be a monolithic group with non-abelian socle, and let us use Notations \ref{mono}.

Let $\mathcal{Z}$ be the set of pairs $(z,w)$ in $X \times X$ such that $\langle z^a, w^b \rangle \supseteq S$ for every $a,b \in S$. By \cite{kls}, $\mathcal{Z} \cap (S \times S) \neq \emptyset$. Let $k$ be a non-$m$-cycle in $K$, let $O_1 = (i_1, \ldots ,i_r)$, $O_2 = (j_1, \ldots ,j_s)$ be two cycles in the cyclic decomposition of $k$, and for $\rho^{-1}(k) \ni h = (x_1, \ldots ,x_m) k$, with $x_1,\ldots,x_m \in X$, let $h_{O_1} := x_{i_1} \cdots x_{i_r}$ and $h_{O_2} := x_{j_1} \cdots x_{j_s}$.

\begin{lemma} \label{nonciclo}
Let $\mathcal{E}_k := \{(h_{O_1},h_{O_2})\ |\ h \in \rho^{-1}(k)\} \cap \mathcal{Z}$. Let $r$ be the smallest prime divisor of $m$. If $g \in \rho^{-1}(k)$ then $\sigma_{Ng}(G) \geq |\mathcal{E}_k| \cdot |S|^{m-m/r-2}$.
\end{lemma}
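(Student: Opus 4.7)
The plan is to isolate a subset $\mathcal{G} \subseteq Ng$ on which only complements and diagonal-type maximal supplements of $N$ can land, then count. First, enlarging each element of a hypothetical cover to a maximal supplement, we may assume by the O'Nan--Scott trichotomy recalled above that every cover element $H$ is either (a) a complement of $N$ (so $|H \cap Ng| = 1$), (b) a product-type subgroup with $H \cap N = M \times M^{a_2} \times \cdots \times M^{a_m}$ for some proper nontrivial $M < S$, or (c) a diagonal-type subgroup associated with an $H$-invariant partition $\Delta$ of $\{1,\ldots,m\}$. In case (c) the common block size divides $m$ and must be at least $2$ (else $H \cap N = N$ would force $H = G$), hence at least $r$, so $|\Delta| \leq m/r$ and $|H \cap Ng| \leq |H \cap N| = |S|^{|\Delta|} \leq |S|^{m/r}$.

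The core step is to show that if $h = (x_1, \ldots, x_m) k \in Ng$ and $(h_{O_1}, h_{O_2}) \in \mathcal{Z}$, then $h$ is contained in no subgroup of type (b). Suppose $h \in N_G(W)$ with $W = M \times M^{a_2} \times \cdots \times M^{a_m}$ and $a_1 = 1$. Expanding $hWh^{-1} = W$ coordinate-by-coordinate in the wreath product yields $a_{k^{-1}(i)} x_i a_i^{-1} \in N_X(M)$ for every $i$. Multiplying these inclusions along the cycle $O_1 = (i_1, \ldots, i_r)$ telescopes to $a_{i_r} h_{O_1} a_{i_r}^{-1} \in N_X(M)$, and similarly along $O_2$ one obtains $a_{j_s} h_{O_2} a_{j_s}^{-1} \in N_X(M)$. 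Since $a_{i_r}, a_{j_s} \in S$, setting $a := a_{i_r}^{-1}$ and $b := a_{j_s}^{-1}$ shows $h_{O_1}^a, h_{O_2}^b \in N_X(M)$; but $N_X(M) \cap S = N_S(M) \lneq S$ by simplicity of $S$, so $\langle h_{O_1}^a, h_{O_2}^b \rangle$ cannot contain $S$, contradicting $(h_{O_1}, h_{O_2}) \in \mathcal{Z}$.

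To finish, let $\mathcal{G} := \{h \in Ng \mid (h_{O_1}, h_{O_2}) \in \mathcal{Z}\}$. By the previous step every element of $\mathcal{G}$ is covered by a subgroup of type (a) or (c) in any cover, each contributing at most $|S|^{m/r}$ elements, so any cover has at least $|\mathcal{G}|/|S|^{m/r}$ elements. To count $|\mathcal{G}|$, the map $Ng \to X \times X$, $h \mapsto (h_{O_1}, h_{O_2})$, is a surjection onto the pair of cosets $Sg_{O_1} \times Sg_{O_2}$ with uniform fibers of size $|S|^{m-2}$: varying the $s$-coordinates of $n = (s_1, \ldots, s_m) \in N$ inside each cycle realizes the full coset $Sg_{O_i}$ by iterated absorption of conjugates, while the $m-r-s$ coordinates outside $O_1 \cup O_2$ are free. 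Identifying the image with the pairs of $\mathcal{E}_k$ that meet $Sg_{O_1} \times Sg_{O_2}$ gives $|\mathcal{G}| = |\mathcal{E}_k| \cdot |S|^{m-2}$, and hence $\sigma_{Ng}(G) \geq |\mathcal{E}_k| \cdot |S|^{m-m/r-2}$. The main obstacle is the telescoping computation in the second paragraph: one must track the interaction of the conjugating elements $a_i$ with the wreath-product permutation $k$ so that the normalizer condition collapses cleanly along each cycle, and keep the conjugating elements inside $S$ so that membership in $\mathcal{Z}$ is genuinely ruled out.
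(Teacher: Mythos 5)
Your proposal is correct and follows essentially the same route as the paper's own proof: isolate the subset of $Ng$ whose cycle-product pair lies in $\mathcal{Z}$, rule out product-type supplements via the normalizer condition telescoped along each cycle (with conjugating elements in $S$), bound the intersection of complements and diagonal-type supplements with $Ng$ by $|S|^{m/r}$, and count the subset as $|\mathcal{E}_k|\cdot|S|^{m-2}$. The only difference is presentational: you carry out explicitly the coordinate computation and the reduction to maximal supplements that the paper dismisses as ``easy to show,'' and you share with the paper the implicit identification of $\mathcal{E}_k$ with the pairs realized inside $Sg_{O_1}\times Sg_{O_2}$.
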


\begin{proof}
Let $$\mathfrak{X} := \{h \in Ng\ |\ (h_{O_1}, h_{O_2}) \in \mathcal{E}_k \}.$$ Note that if $h \in Ng$, $\theta$, $\varphi \in X$ are such that $h_{O_1} \equiv \theta \mod S$ and $h_{O_2} \equiv \varphi \mod S$ then there exists $t \in N$ such that $(th)_{O_1} = \theta$, $(th)_{O_2} = \varphi$. This implies that $|\mathfrak{X}| \geq |\mathcal{E}_k| \cdot |S|^{m-2}$. It is easy to show that if $a_2, \ldots ,a_m \in S$ and $h \in \rho^{-1}(k) \cap N_G(M \times M^{a_2} \times \cdots \times M^{a_m})$ then $h_{O_1} \in N_X(M)^{a_{i_1}}$, $h_{O_2} \in N_X(M)^{a_{j_1}}$. By the definition of $\mathcal{E}_k$, we deduce that $\mathfrak{X} \cap H = \emptyset$ whenever $H$ is a supplement of $N$ of product type. Since the maximal subgroups of $G$ complementing $N$ intersect $Ng$ in at most one point, this implies that in order to cover $\mathfrak{X}$ with supplements of $N$ we need at least $|\mathcal{E}_k| \cdot |S|^{m-2}/|S|^{m/r}$ of them.
\end{proof}

We are ready to state the main theorem.

\begin{teor}
Let $H$ be a $\sigma$-elementary non-abelian group. We will use the notations of Theorem \ref{struttura}. Let $N = N_1$ be a non-abelian minimal normal subgroup of $H$ and let $G := H/R_H(N) = X_1$ be the primitive monolithic group associated to $N$. Assume that $\min \{\sigma^{\ast}(X_i)\ :\ i=1,\ldots,h\} = \sigma^{\ast}(G)$. Let us use Notations \ref{mono}, and let $r$ be the smallest prime divisor of $m$. Suppose that $X/S$ is abelian. Let $E_{nc} := \min \{|\mathcal{E}_k|\ |\ k \in K\ \text{non-}m\text{-cycle}\}$ ($\mathcal{E}_k$ is as in Lemma \ref{nonciclo}). Suppose that whenever $x \in X$ is such that $\langle x,S \rangle = T$ there exist families $\mathcal{M},\mathcal{J}$ of maximal subgroups of $X$ supplementing $S$ such that:

\begin{enumerate}
\item $xS \subseteq \bigcup_{M \in \mathcal{M} \cup \mathcal{J}} M$;
\item $\sum_{M \in \mathcal{M} \cup \mathcal{J}} |S:S \cap M|^{m-1} < E_{nc} \cdot |S|^{m-m/r-2}$;
\item $\sigma_{Ny}(Y) \geq \sum_{M \in \mathcal{M}} |S:S \cap M|^{m-1}$ (notation is as in Definition \ref{star}) whenever $Y$ is a primitive monolithic group with socle $N$ and $y \in Y$ is such that $\langle N,y \rangle = Y$.
\item $\sum_{M \in \mathcal{J}} |S:S \cap M|^{m-1} + 2^{m-1} < \sum_{M \in \mathcal{M}} |S:S \cap M|^{m-1}$;
\end{enumerate}

Then $H \cong G$, in other words $H$ is monolithic.

\end{teor}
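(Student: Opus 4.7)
The plan is to reduce to the single strict inequality $\sigma(G) < 2\sigma^{\ast}(G)$ and then invoke Proposition \ref{starmin}: together with the minimality assumption $\sigma^{\ast}(X_1) = \min_i \sigma^{\ast}(X_i)$, that inequality forces $H \cong X_1 = G$, which is what we want. The four hypotheses are engineered, respectively, to provide a Theorem \ref{sopra}-style covering of $G$ (hypothesis (1)), to split the lower bound on $\sigma^{\ast}(G)$ into a ``non-$m$-cycle'' regime and an ``$m$-cycle'' regime (hypotheses (2) and (3) respectively), and to absorb the intransitive-subgroup overhead $2^{m-1}$ coming from Theorem \ref{sopra} (hypothesis (4)).

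First I establish the upper bound on $\sigma(G)$. Choose $x \in X$ with $\langle x, S\rangle = T$; in the setting where Theorem \ref{sopra} applies cleanly (i.e.\ $L=T$), this same $x$ also satisfies $\langle x, T\rangle = L$. The hypothesis yields families $\mathcal{M}, \mathcal{J}$ of maximal supplements of $S$ in $X$ with $xS \subseteq \bigcup_{M \in \mathcal{M}\cup \mathcal{J}} M$, so Theorem \ref{sopra} gives
\[
\sigma(G) \leq 2^{m-1} + \sum_{M \in \mathcal{M}} |S:S\cap M|^{m-1} + \sum_{M \in \mathcal{J}} |S:S\cap M|^{m-1}.
\]
Invoking hypothesis (4) to absorb $2^{m-1}$ and $\sum_{M \in \mathcal{J}}$ into a second copy of $\sum_{M \in \mathcal{M}}$ gives
\[
\sigma(G) < 2 \sum_{M \in \mathcal{M}} |S:S\cap M|^{m-1}.
\]

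Next I bound $\sigma^{\ast}(G)$ from below. Let $\Omega = \bigcup_i N\omega_i$ realize the minimum in Definition \ref{star}, so $\langle \Omega\rangle = G$. I distinguish two cases according to the images $\rho(\omega_i) \in K$. If some $\omega_i$ projects to an element $k \in K$ that is not an $m$-cycle, Lemma \ref{nonciclo} and the definition of $E_{nc}$ yield
\[
\sigma^{\ast}(G) \geq \sigma_{N\omega_i}(G) \geq |\mathcal{E}_k|\cdot |S|^{m-m/r-2} \geq E_{nc}\cdot |S|^{m-m/r-2},
\]
which by hypothesis (2) strictly exceeds $\sum_{M \in \mathcal{M}}|S:S\cap M|^{m-1}$. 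Otherwise every $\rho(\omega_i)$ is an $m$-cycle; I fix one such coset representative $\omega := \omega_1$ and set $Y := \langle N,\omega\rangle$. Since $\omega$ acts transitively on the $m$ simple factors of $N$ and $C_G(N) = 1$ (because $G$ is monolithic with non-abelian socle), $Y$ is monolithic with socle $N$ and $\omega$ generates $Y$ modulo $N$ tautologically. After passing, if necessary, to the primitive monolithic quotient $Y/R_Y(N)$, hypothesis (3) applies and gives $\sigma_{N\omega}(Y) \geq \sum_{M \in \mathcal{M}}|S:S\cap M|^{m-1}$; and intersecting any cover of $N\omega$ by supplements of $N$ in $G$ with $Y$ shows $\sigma_{N\omega}(Y) \leq \sigma_{N\omega}(G) \leq \sigma^{\ast}(G)$, so the same bound holds for $\sigma^{\ast}(G)$.

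Combining the two sides yields $\sigma(G) < 2\sigma^{\ast}(G)$ in both cases, and Proposition \ref{starmin} then forces $H \cong G$. The step I expect to be the main obstacle is the verification in the second case that $Y = \langle N,\omega\rangle$ is genuinely primitive (so that hypothesis (3) applies directly), or alternatively that the passage to the primitive quotient $Y/R_Y(N)$ preserves the lower bound on the covering of the relevant coset. This is where the assumption ``$X/S$ is abelian'' must be exploited, since it controls the normal subgroups of $Y$ lying in $C_Y(N)$ and restricts the structure of $R_Y(N)$.
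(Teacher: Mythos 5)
Your argument is essentially the paper's own proof: Theorem \ref{sopra} together with hypotheses (1) and (4) gives $\sigma(G) < 2\sum_{M \in \mathcal{M}} |S:S\cap M|^{m-1}$, the case analysis on a coset of an optimal $\Omega$ (non-$m$-cycle image handled by Lemma \ref{nonciclo} and hypothesis (2), $m$-cycle image handled by hypothesis (3)) gives $\sigma^{\ast}(G) \geq \sum_{M \in \mathcal{M}} |S:S\cap M|^{m-1}$, and Proposition \ref{starmin} then yields $H \cong G$, exactly as in the paper. The only point you flag as a possible obstacle is not one: when $\rho(\omega)$ is an $m$-cycle, $Y=\langle N,\omega\rangle$ is monolithic with non-abelian socle $N$ and $C_Y(N)=1$, and any monolithic group with non-abelian socle is automatically primitive (a maximal subgroup not containing $N$ has core centralizing $N$, hence trivial), so hypothesis (3) applies directly and no passage to $Y/R_Y(N)$ is needed.
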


\begin{proof}
By Lemma \ref{starmin}, it is enough to show that $\sigma(G) < 2 \sigma^{\ast}(G)$. Let us do that. Since (1) holds, we may apply Theorem \ref{sopra} and obtain that $\sigma(G) \leq 2^{m-1} + \sum_{M \in \mathcal{M} \cup \mathcal{J}} |S:S \cap M|^{m-1}$. Fix a set $\Omega$ of cosets of $N$ in $G$ such that $\sigma^{\ast}(G) = \sigma_{\Omega}(G)$. Clearly, if $gN \in \Omega$ then $\sigma^{\ast}(G) \geq \sigma_{Ng}(G) = \sigma_{Ng}( \langle N,g \rangle)$. By (2) and Lemma \ref{nonciclo}, $\rho(g)$ is an $m$-cycle, therefore $\langle N,g \rangle$ is a primitive monolithic group hence by (3) $$\sigma^{\ast}(G) \geq \sigma_{Ng} (G) = \sigma_{Ng} (\langle N,g \rangle) \geq \sum_{M \in \mathcal{M}} |S:S \cap M|^{m-1}.$$Therefore by Theorem \ref{sopra} and (4), $$\sigma(G) \leq 2^{m-1} + \sum_{M \in \mathcal{M} \cup \mathcal{J}} |S:S \cap M|^{m-1} < 2 \sum_{M \in \mathcal{M}} |S:S \cap M|^{m-1} \leq 2 \sigma^{\ast}(G).$$Therefore $\sigma(G) < 2 \sigma^{\ast}(G)$.
\end{proof}

Fulfilling condition (3) requires the type of results listed in Theorems \ref{t1} and \ref{t2} (indeed, note that in Condition (3) the quotient $Y/N$ is cyclic). In my Ph.D. thesis I give several examples of applications of this result, and in particular I prove the following.

\begin{cor}
Let $H$ be a non-abelian $\sigma$-elementary group. If all the minimal subnormal subgroups of $H$ are alternating groups of even degree larger than $30$ then $H$ is monolithic.
\end{cor}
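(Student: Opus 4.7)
The plan is to combine Proposition \ref{starmin} with the main theorem just stated. Among the minimal normal subgroups $N_1,\ldots,N_\ell$ of $H$, each non-abelian $N_i$ is by hypothesis a power $S_i^{m_i}$ of an alternating group $S_i = \Alt(n_i)$ with $n_i$ even and $n_i > 30$; relabel so that $N_1$ is non-abelian with $\sigma^\ast(X_1)$ minimum among the non-abelian indices, and set $G := X_1$, $N = N_1 = S^m$ with $S = \Alt(n)$, $n$ even, $n>30$. Since $\Out(\Alt(n)) \cong C_2$ for $n \geq 7$, the almost simple group $X$ of Notations \ref{mono} is either $\Alt(n)$ or $\Sym(n)$, so $X/S$ is abelian. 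By Proposition \ref{starmin} it suffices to prove $\sigma(G) < 2\sigma^\ast(G)$, and this will follow from the main theorem once we exhibit families $\mathcal{M}, \mathcal{J}$ satisfying (1)--(4) for each $x\in X$ with $\langle x,S\rangle = T$.

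The construction of $\mathcal{M}$ and $\mathcal{J}$ I would use is the one underlying the upper bounds in Theorems \ref{t1} and \ref{t2}: take $\mathcal{M}$ to consist of (representatives of the conjugates within $xS$ of) the stabilizers of ``half-subsets'' of $\{1,\ldots,n\}$, which produce the dominant term $\tfrac{1}{2^m}\binom{n}{n/2}^m$ (or $\bigl(\tfrac{1}{2}\binom{n}{n/2}\bigr)^m$) in $\sigma(G)$, and take $\mathcal{J}$ to collect the smaller-subset stabilizers (of odd sizes up to $n/2-2$ in the $\Alt(n)$ case, or sizes up to $\lfloor n/3\rfloor$ in the $\Sym(n)$ case) needed to cover the permutations in $xS$ which move every half-subset. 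Condition (1) then records that these maximal subgroups cover $xS$, condition (3) is precisely the definite-unbeatability lower bound of \cite{margar,gar2} applied to $Y/N$ cyclic (the same set $\Pi$ of ``balanced'' $m$-tuples used there forces at least $\sum_{M\in\mathcal{M}}|S:S\cap M|^{m-1}$ supplements in any cover of $Ny$), and condition (4) is a routine asymptotic comparison using $\binom{n}{i} < \binom{n}{n/2}$ for $i \leq n/3$, together with the fact that $2^{m-1}$ is dwarfed by $\binom{n}{n/2}^{m-1}$ already for $n \geq 4$.

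The main obstacle is condition (2):
\[
\sum_{M\in\mathcal{M}\cup\mathcal{J}} |S:S\cap M|^{m-1} \;<\; E_{nc}\cdot |S|^{m-m/r-2}.
\]
For $m=1$ the group $K$ is trivial and there are no non-$m$-cycles, so $E_{nc}=+\infty$ and (2) is vacuous. For $m\geq 2$ the left-hand side is at most $\binom{n}{n/2}^m \leq 2^{nm}$, while the right-hand side is at least $c\cdot |S|^{m-m/r} \geq c\cdot (n!/2)^{m/2}$, where the uniform bound $E_{nc}\geq c|S|^2$ in $k$ comes from the fact (Dixon; Liebeck--Shalev) that the proportion of pairs $(z,w)\in \Alt(n)\times\Alt(n)$ satisfying $\langle z^a,w^b\rangle \supseteq S$ for every $a,b\in S$ tends to $1$ as $n\to\infty$; the threshold $n>30$ is precisely what is needed to make this effective so that $2^{nm} < c(n!/2)^{m/2}$ holds uniformly in $m\geq 2$. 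Once condition (2) is verified, the main theorem gives $\sigma(G) < 2\sigma^\ast(G)$ and Proposition \ref{starmin} yields $H\cong G$, proving the corollary.
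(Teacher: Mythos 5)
Your overall skeleton is the intended one: reduce via Proposition \ref{starmin} to showing $\sigma(G) < 2\sigma^{\ast}(G)$ for the associated primitive monolithic group $G$ with socle $\Alt(n)^m$, note $X \in \{\Alt(n),\Sym(n)\}$ so $X/S$ is abelian, and then feed subset-stabilizer families $\mathcal{M},\mathcal{J}$ into the main theorem, with conditions (3) coming from the coset lower bounds behind Theorems \ref{t1} and \ref{t2}. The paper itself only sketches this (the details are in the thesis), so the issue is whether your verification of (1)--(4) stands; it does not, at the key point.

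The genuine gap is condition (2). Your lower bound $E_{nc} \geq c|S|^2$ is justified by the claim that the proportion of pairs $(z,w)$ with $\langle z^a,w^b\rangle \supseteq S$ for all $a,b \in S$ tends to $1$, attributed to Dixon and Liebeck--Shalev. Those results concern ordinary generation, not invariable generation; for invariable generation by \emph{two} random permutations the proportion does not tend to $1$ (by Eberhard--Ford--Green it in fact tends to $0$), so the asserted uniform bound with a fixed constant $c>0$ is false for large $n$, and the paper's own input from \cite{kls} only gives $\mathcal{Z}\cap(S\times S)\neq\emptyset$. The inequality you actually need is much weaker (a proportion decaying even superexponentially in $n$ would do, since the left side of (2) is at most roughly $\bigl(\tfrac12\binom{n}{n/2}\bigr)^m$ while the right side carries a factor $|S|^{m-m/r-2}$), but to obtain it you must produce an \emph{effective} count of pairs in the prescribed cosets $Sg_{O_1}\times Sg_{O_2}$ of $S$ in $X$ (these need not be $S\times S$ when $X=\Sym(n)$) whose conjugates always generate a group containing $\Alt(n)$ --- e.g.\ via cycle types forcing transitivity plus a long prime cycle and Jordan's theorem --- uniformly over all non-$m$-cycles $k$ and all $m\geq 2$, and explicitly enough to certify the threshold $n>30$; an ineffective asymptotic statement cannot yield that threshold. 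A secondary soft spot: for $n\equiv 0 \pmod 4$ with $X=S=\Alt(n)$, Theorem \ref{t1} only provides the weaker bound of type (2) (a factor $\tfrac12$ and odd-size subsets), so the definite-unbeatability input you invoke for condition (3) with the half-set family $\mathcal{M}$ is not available as stated, and the families $\mathcal{M},\mathcal{J}$ must be chosen case by case (according to the residue of $n$ mod $4$ and whether $X$ is $\Alt(n)$ or $\Sym(n)$) so that (3) and (4) hold simultaneously; as written you assume this matching without checking it.
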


\end{document}